\newtheorem{thm}{Theorem}[section]
\newtheorem{lem}{Lemma}[section]
\newtheorem{prop}{Proposition}[section]
\theoremstyle{definition}
\newtheorem{defn}{Definition}[section]
\newtheorem{ex}{Example}[section]
\newtheorem{rem}{Remark}[section]
\numberwithin{equation}{section}
\renewcommand{\theequation}{\thesection.\arabic{equation}}
\newcommand{\Rmnum}[1]{\expandafter\@slowromancap\romannumeral #1@}
\title{Dynamics  near a class of nonhyperbolic fixed points}
\author{Meihua Jin, Shihao Meng, Yunhua Zhou \\\small {College of Mathematics and Statistics, Chongqing University,
Chongqing 401331, China}
}
\date{ }
\begin{document}
\baselineskip20pt
\maketitle
\renewcommand{\theequation}{\arabic{section}.\arabic{equation}}
\catcode`@=11 \@addtoreset{equation}{section} \catcode`@=12
\begin{abstract}

In this paper, we investigate some dynamical properties near a  nonhyperbolic  fixed point.  Under some conditions on the higher nonlinear terms, we  establish a stable manifold theorem and a degenerate Hartman theorem. Furthermore, the finite shadowing property also be discussed.

{\textbf{Keywords:}} stable manifold, Hartman Theorem, finite shadowing
\end{abstract}

\section{Introduction}\label{Section 1}

In dynamical system theory, the local properties of fixed points are crucial for understanding the system's dynamic behavior. Although much research has focused on hyperbolic fixed points, the study of nonhyperbolic fixed points remains challenging. The behavior of nonhyperbolic fixed points is complex, particularly in terms of stability and trajectory structure. How to reveal the local properties of such systems has become an important research topic.

The stable manifold theorem is a fundamental tool for describing the trajectories near hyperbolic fixed points. At the turn of the 19th and 20th centuries, during the study of the three-body problem, Poincar\'{e} was the first to recognize the importance of stable and unstable manifolds. Through geometric intuition, he described the structure of trajectories near fixed points and introduced the concepts of stable and unstable manifolds. In 1930, Perron\cite{OP} proved the existence of local stable manifolds. The proof of the stable manifold theorem has two main methods\cite{S}: Hadamard's (1901) graphical transformation method and Perron's (1929) parameter variation method. After decades of development, the stable manifold theorem has become an indispensable part of dynamical system analysis and has found widespread applications in hyperbolic systems, nonuniformly hyperbolic systems\cite{BP}, and partially hyperbolic systems\cite{KA}. Recently, the stable and unstable manifolds of certain non-hyperbolic fixed points have also been studied (e.g., see \cite{BFM,BFM2,ZZ}).

The classic Hartman Theorem (or Grobman-Hartman Theorem) asserts that a diffeomorphism locally conjugate to its linear part at a hyperbolic fixed point. This linearization result was independently obtained by Grobman\cite{G,G2} and Hartman\cite{PH,PH2} in finite-dimensional settings and later extended to Banach spaces by Palis\cite{P2} and Pugh\cite{PC}. The  Hartman Theorem plays a significant role in dynamical systems and differential equations. More detailed information about this important theorem and its applications can be found in relevant literature\cite{H,PM,KH,SM}. By replacing the hyperbolicity assumption at the fixed point with generalized hyperbolicity, the Hartman Theorem has been further generalized (e.g., see\cite{Mo}, \cite{NA}). Specifically, by imposing certain conditions, the behavior of a two-dimensional diffeomorphism near a nonhyperbolic fixed point can be topologically equivalent to a simple decoupled map, a process known as the degenerate Hartman theorem\cite{Mo}.

The shadowing property of dynamical systems (diffeomorphisms or flows) has been thoroughly studied (see related monographs and surveys\cite{PK,PYS,T}). This property indicates that for approximate trajectories (i.e., pseudo-orbits), there exist exact trajectories of the system nearby. In most cases, standard methods can prove that diffeomorphisms near hyperbolic fixed points have shadowing properties, and this property satisfies Lipschitz conditions\cite{PK}. Additionally, Quasi-shadowing for partially hyperbolic diffeomorphisms and the shadowing lemma for nonuniformly hyperbolic systems have been well established (\cite{HZ,KH,Po}). Sufficient conditions for a diffeomorphism $f$ to have a finite tracking property on $X$ near non-hyperbolic fixed points have also been provided\cite{PP,PP2}.

However, for non-hyperbolic fixed points, especially degenerate ones, a unified theoretical framework is still lacking.
This paper aims to investigate a class of local dynamical properties of nonhyperbolic fixed points, with a focus on the stable manifold theorem, degenerate Hartman Theorem, and the finite shadowing property. The results of this paper extend, to some extent, the results of \cite{M}, \cite{Mo}, and \cite{PP}.

In this paper, we consider the following  map of plane
\begin{equation}\label{eq1}
\begin{array}{rcl}
F=(f, g): \ \mathbb{R}^2 & \to & \mathbb{R}^2\\
(x, y)&\mapsto &
(x-P(x,y)+X(x,y),
y+Q(x,y)+Y(x,y)),
\end{array}
\end{equation}
where $P$ and $Q$ are  homogeneous polynomials of degree $2k+1$ and $2k'+1$ respectively for some $k, k'\in \mathbb N$, $X$ and $Y$ are higher-order terms than $P$ and $Q$. Let $\mathscr{A} $,  $\mathscr{B} $,  $\mathscr{C} $, $\mathscr{D} $, $\mathscr{E} $, and $\mathscr{H} $  be the corresponding  symmetric coefficient tensors of 
\begin{align*}
    \frac{\partial P}{\partial x},\,  \frac{\partial Q}{\partial y},\, \frac{\partial P}{\partial x} + \frac{\partial P}{\partial y},\, \frac{\partial P}{\partial x} - \frac{\partial P}{\partial y},\, \frac{\partial Q}{\partial y}+ \frac{\partial Q}{\partial x},\,and \, \frac{\partial Q}{\partial y}- \frac{\partial Q}{\partial x}.
\end{align*}
See more details about the tensor in Section \ref{sec2}.

Throughout this paper, we assume that
$X(0,0)=Y(0,0)=0.$
 That is, $\mathbf{0}=(0,0)$ is a fixed point of $F.$
 
For a neighborhood $N$ of $\mathbf 0$, we define the stable and unstable sets restricted to $N$ as
$$
\begin{aligned}
W^s(F,N)&=\{(x,y)\in N:\ F^k(x,y)\in N, k\geq 0, \lim\limits_{k\to +\infty}F^k(x,y)=(0,0)\},\\
W^u(F,N)&=\{(x,y)\in N:\ F^{-k}(x,y)\in N, k\geq 0, \lim\limits_{k\to +\infty}F^{-k}(x,y)=(0,0)\}.
\end{aligned}
$$

\begin{thm}\label{thmA}
If   $\mathscr{A}   $,  $\mathscr{B} $,  $\mathscr{C} $, $\mathscr{D} $, $\mathscr{E} $, and $\mathscr{H} $  are positive definite, then there is  a neighborhood $N$ of $\mathbf 0$ and $\delta>0$ such that the stable and unstable sets $ W^s(F,N)$ and $ W^u(F,N)$  are the graphs of two Lipschitz functions $\varphi^s$ and $\varphi^u: [-\delta, \delta]\to \mathbb{R}^1$.
\end{thm}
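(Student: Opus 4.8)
The plan is to realise $W^u(F,N)$ as the fixed graph of a Hadamard-type graph transform over the weakly expanding $y$-axis, and to obtain $W^s(F,N)$ by the same construction applied to $F^{-1}$. All that the hypotheses contribute is a pair of pointwise inequalities: since $\partial P/\partial x\pm\partial P/\partial y$ and $\partial Q/\partial y\pm\partial Q/\partial x$ are homogeneous of even degree, positive definiteness of $\mathscr C,\mathscr D$ means exactly $\partial P/\partial x>|\partial P/\partial y|$ on $\mathbb R^2\setminus\{0\}$ and positive definiteness of $\mathscr E,\mathscr H$ means $\partial Q/\partial y>|\partial Q/\partial x|$ there (both of which already force $\mathscr A,\mathscr B$, i.e.\ $\partial P/\partial x,\partial Q/\partial y>0$); by homogeneity and compactness of the unit circle these sharpen to $\partial P/\partial x-|\partial P/\partial y|\ge a\|v\|^{2k}$ and $\partial Q/\partial y-|\partial Q/\partial x|\ge b\|v\|^{2k'}$ with $a,b>0$, together with matching upper bounds. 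Because $X$ and $Y$ vanish to strictly higher order than $P$ and $Q$, the functions $X,Y$ and their first derivatives are of strictly smaller order near $\mathbf 0$ than the corresponding polynomial terms; in particular $DF(\mathbf 0)=\mathrm{Id}$, so after fixing $\delta>0$ small the map $F$ is a diffeomorphism of a neighbourhood of $N:=[-\delta,\delta]^2$, and so is $F^{-1}$.

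I would first fix the cone fields $C^h=\{(u,v):|v|\le|u|\}$, $C^v=\{(u,v):|u|\le|v|\}$ and, writing $DF=\bigl(\begin{smallmatrix}a&b\\ c&d\end{smallmatrix}\bigr)$ with $a=1-P_x+X_x$, $b=-P_y+X_y$, $c=Q_x+Y_x$, $d=1+Q_y+Y_y$, check that for $\delta$ small and every $z\in N$ one has $DF_z(C^v\setminus\{0\})\subseteq\operatorname{int}C^v$ and $DF^{-1}_z(C^h\setminus\{0\})\subseteq\operatorname{int}C^h$; a short computation shows that both inclusions reduce to the single inequality $a+|b|+|c|<d$, i.e.\ to $P_x+Q_y>|P_y|+|Q_x|+(\text{first derivatives of }X,Y)$, which holds because the left side is $\gtrsim\|z\|^{2\min(k,k')}$ by the bounds above while the error is $O(\|z\|^{2\min(k,k')+1})$. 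The same computation yields, for $w\in C^h$, the one-sided estimate $|(DF^{-1}_z w)_1|\ge\bigl(d(\eta)-|b(\eta)|\bigr)\Delta(\eta)^{-1}|w_1|$ at $\eta=F^{-1}(z)$, where $\Delta=\det DF$ and $(d-|b|)/\Delta\ge1$ on all of $N$, with strict inequality off $\mathbf 0$; this last bound is the only one that does not degenerate at the fixed point.

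Next, let $\mathcal X$ be the set of $1$-Lipschitz maps $\psi:[-\delta,\delta]\to\mathbb R$ with $\psi(0)=0$, so that automatically $\psi([-\delta,\delta])\subseteq[-\delta,\delta]$ and $\mathcal X$ is a compact convex subset of $C^0([-\delta,\delta])$. For $\psi\in\mathcal X$ the curve $F(\operatorname{graph}\psi)$ passes through $\mathbf 0=F(\mathbf 0)$ and, by $DF$-invariance of $C^v$, has all its tangent directions in $C^v$, so it is a $1$-Lipschitz graph through the origin over a $y$-interval which, since $Q$ restricted to $\operatorname{graph}\psi$ is monotone in $y$ (from $|Q_x|<Q_y$) and vanishes at the origin, covers $[-\delta,\delta]$. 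Reparametrising this curve by $y$ and cutting it off to $|y|\le\delta$ defines $\Gamma\psi\in\mathcal X$; one checks $\Gamma:\mathcal X\to\mathcal X$ is continuous, so by Schauder's fixed point theorem it has a fixed point $\psi^u$. With $\Lambda=\operatorname{graph}\psi^u$, the identity $\Gamma\psi^u=\psi^u$ says exactly $F(\Lambda)\cap N=\Lambda$, hence $F^{-1}(\Lambda)\subseteq\Lambda$; thus the backward $F$-orbit of any $z\in\Lambda$ stays in $\Lambda\subseteq N$, and since its $y$-coordinates satisfy $|y_{-(n+1)}|\le|y_{-n}|-c_0|y_{-(n+1)}|^{2k'+1}$ along $\Lambda$ (again from $|Q_x|<Q_y$) they tend to $0$, whence $z_{-n}\to\mathbf 0$. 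Therefore $\Lambda\subseteq W^u(F,N)$.

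It remains to prove $W^u(F,N)\subseteq\Lambda$. If $p,q\in W^u(F,N)$ with $w_0:=p-q\notin C^v$, i.e.\ $w_0\in\operatorname{int}C^h$, then, $N$ being convex and both backward orbits $p^{(n)}=F^{-n}(p)$, $q^{(n)}=F^{-n}(q)$ staying in $N$, the mean-value identity $w_{n+1}=\bigl(\int_0^1 DF^{-1}(q^{(n)}+tw_n)\,dt\bigr)w_n$ together with $C^h$-invariance of $DF^{-1}$ keeps $w_n=p^{(n)}-q^{(n)}\in\operatorname{int}C^h$ for all $n$, and the bound $(d-|b|)/\Delta\ge1$ then forces $|(w_n)_1|$ to be nondecreasing, so $|(w_n)_1|\ge|(w_0)_1|>0$; but $p^{(n)},q^{(n)}\to\mathbf 0$ by definition of $W^u(F,N)$, so $w_n\to\mathbf 0$, a contradiction. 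Hence any two points of $W^u(F,N)$ differ by a vector of $C^v$, so $W^u(F,N)$ is the graph of a $1$-Lipschitz function over its projection $D\subseteq[-\delta,\delta]$ to the $y$-axis; since $\Lambda\subseteq W^u(F,N)$ already projects onto all of $[-\delta,\delta]$, we get $D=[-\delta,\delta]$ and $W^u(F,N)=\Lambda=\operatorname{graph}\varphi^u$ with $\operatorname{Lip}\varphi^u\le1$. The statement for $W^s(F,N)=W^u(F^{-1},N)$ is obtained verbatim, building the graph over the $x$-axis, pushing forward with $F^{-1}$, and using that $DF^{-1}$ preserves $C^h$ and weakly expands its first coordinate while $DF$ preserves $C^v$. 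I expect the genuine obstacle to be precisely the non-uniform, merely polynomial hyperbolicity: the cone contraction and expansion rates tend to $1$ as $z\to\mathbf 0$, so $\Gamma$ is not a uniform contraction --- which is why existence goes through Schauder rather than Banach --- and the cone estimates, non-strict at $\mathbf 0$, only deliver uniqueness when combined with the convergence $F^{\mp n}(z)\to\mathbf 0$ that is built into the definitions of $W^s$ and $W^u$.
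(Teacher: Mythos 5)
Your proposal is correct, and it shares the paper's analytic backbone: the positive-definiteness hypotheses are converted into the pointwise inequalities $\partial P/\partial x>|\partial P/\partial y|$ and $\partial Q/\partial y>|\partial Q/\partial x|$, hence into invariant cone fields with a transverse expansion estimate that degenerates only at $\mathbf 0$, and the uniqueness/graph property is obtained exactly as in the paper by playing that expansion against the convergence $F^{\mp n}(z)\to\mathbf 0$ built into the definitions of $W^s$ and $W^u$. The existence half, however, goes by a genuinely different route. The paper follows McGehee: it takes an $\alpha$-vertical arc spanning $\mathcal C^s(1,\delta)$, iterates it forward, cuts off at the cone boundary (Lemma 3.1(2) controls where the arc exits), and pulls back to get a nested sequence of nonempty compacta whose intersection is the unique point of the arc in $W^s$; no fixed-point theorem in function space is used, and the resulting ``each vertical arc meets $W^s$ exactly once'' statement (Proposition 3.1) is precisely what gets recycled in the proof of the degenerate Hartman theorem. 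You instead run the Hadamard graph transform on the compact convex set of $1$-Lipschitz graphs through the origin and invoke Schauder, correctly diagnosing that Banach's theorem is unavailable because the rates tend to $1$ at the fixed point. Both mechanisms work; yours yields the invariant graph directly (with Lipschitz constant $1$ rather than the paper's $\alpha$) at the price of verifying continuity of the transform and that the pushed-forward graphs cover $[-\delta,\delta]$, while the paper's compactness argument is more elementary and dovetails with its later foliation construction. Two routine points deserve explicit mention in a full write-up: since your graphs are only Lipschitz, cone invariance must be applied in chord form, i.e.\ you need the averaged matrix $\int_0^1 DF(q+tw)\,dt$ to preserve $C^v$ even though the cone is not convex (this does follow, because the bounds $a+|b|<d-|c|$ pass to averages); and the estimate $(d-|b|)/\Delta\ge 1$ is evaluated at $F^{-1}(z)$, so the inequalities should be arranged on a neighbourhood slightly larger than $N$.
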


For any $x\in \mathbb R$ closing to $0$, there is a unique point $\mathbf{p}(x)=(p_x,p_y)\in W^s$ such that $p_x=x$. Similarly, any $y\in \mathbb R$ closing to $0$, there is a unique point $\mathbf{q}(y)=(q_x,q_y)\in W^u$ such that $q_y=y$. Define
$$F_1(x)=\pi_1\circ F(\mathbf{p}(x)),\ F_2(y)=\pi_2\circ F(\mathbf{q}(y)).$$
The following is a version of Hartman theorem.
\begin{thm}\label{thmB}
Suppose that   $\mathscr{A}   $,  $\mathscr{B} $,  $\mathscr{C} $, $\mathscr{D} $, $\mathscr{E} $, and $\mathscr{H} $  are positive definite. There are two neighborhoods $N$ and $N'$ of $(0, 0)$ such that $F|_N$ is topologically conjugate to $F'|_{N'}$, where  $F'=F_1\times F_2$.
\end{thm}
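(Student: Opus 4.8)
The plan is to first move the invariant manifolds into normal position and then run a fundamental‑domain construction. Using the Lipschitz graphs $\varphi^s,\varphi^u$ furnished by Theorem \ref{thmA}, I would build a bi‑Lipschitz homeomorphism $\Psi$ of a neighborhood of $\mathbf 0$ — concretely $\Psi=\Psi_2\circ\Psi_1$ with $\Psi_1(x,y)=(x,\,y-\varphi^s(x))$ and $\Psi_2(u,v)=(u-\psi(v),\,v)$, where $x=\psi(v)$ is the still nearly vertical Lipschitz graph into which $\Psi_1$ carries $W^u$ — so that in the new coordinates $W^s=\{v=0\}$ and $W^u=\{u=0\}$. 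Tracing the definitions of $\mathbf p,\mathbf q,F_1,F_2$ through $\Psi$ shows that $\widetilde F:=\Psi F\Psi^{-1}$ leaves both coordinate axes locally invariant and, after one further reparametrization of the $v$‑axis absorbing the discrepancy between the $\varphi^u$‑parameter and the straightened $v$‑coordinate, coincides on the two axes with the product model $F'=F_1\times F_2$ (which already has $W^s=\{v=0\}$, $W^u=\{u=0\}$ and the same axis dynamics). It therefore suffices to produce a homeomorphism between neighborhoods of $\mathbf 0$ that is the identity on the coordinate axes and conjugates $\widetilde F$ to $F'$; I write $F$ for $\widetilde F$ below.

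The heart of the argument is a structural lemma about a small square $N=[-a,a]^2$. Positive definiteness of $\mathscr A,\dots,\mathscr H$ makes $DF$ diagonally dominant in the contracting/expanding sense and forces $|x|$ to strictly decrease and $|y|$ to strictly increase along every orbit segment that stays in $N$ — monotonicity that is qualitative and hence survives the bi‑Lipschitz change of coordinates. Shrinking $a$, I would deduce: (i) the two axes cut $N$ into four closed quadrant regions $Q_\bullet$, with $F(W^s\cap N)\subseteq W^s$, $F^{-1}(W^u\cap N)\subseteq W^u$, and $F|_{W^s}$, $F^{-1}|_{W^u}$ topological contractions; (ii) on the interior of each $Q_\bullet$ the dynamics is of crossing type with no return — every point has a finite maximal orbit segment in $N$, entering through one face and leaving through the opposite face, and once an orbit leaves $N$ it never re‑enters; (iii) there is a section arc $\Gamma_\bullet\subset Q_\bullet$ joining the two bounding half‑axes that meets each such orbit segment exactly once, and on $\Gamma_\bullet$ the forward exit‑time $T\colon\Gamma_\bullet\to\mathbb N$ (and the backward one) is finite, changes by $1$ across a countable jump set, and tends to $+\infty$ only toward the appropriate endpoint. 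The same picture holds for $F'=F_1\times F_2$ with explicit, transparent proofs, since $F_1$ contracts and $F_2$ expands an interval monotonically, so $F'$ is a genuine topological saddle; write $\Gamma'_\bullet,T'$ for its data, with $N'$ having the same axis extents as $N$ and the section arcs chosen so that $T$ and $T'$ have the same range.

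Granting this, I construct $h$ on each $\overline{Q_\bullet}$ as follows: set $h=\mathrm{id}$ on the two half‑axes; since $T$ and $T'$ are staircase functions with the same range, blowing up only at corresponding ends, choose a homeomorphism $h_\Gamma\colon\Gamma_\bullet\to\Gamma'_\bullet$ matching level sets, $T'\circ h_\Gamma=T$, and agreeing with $\mathrm{id}$ at the axis endpoints; then extend along orbit segments by forcing the conjugacy, $h(F^nq):=(F')^n\bigl(h_\Gamma(q)\bigr)$ for $q\in\Gamma_\bullet$ and $n$ in the allowed range. The identity $T'\circ h_\Gamma=T$ is exactly what makes this $h$ continuous across the $T$‑jumps — a nearby orbit with one extra iterate in $N$ has that iterate close to $\partial N$ on both sides — it extends continuously down to the axes where it matches $\mathrm{id}$, and it has a continuous inverse (by running the same construction for $F^{-1},(F')^{-1}$, or by direct injectivity from the crossing structure). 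Gluing the four pieces along the axes yields a homeomorphism $h\colon N\to N'$ with $h\circ F=F'\circ h$ on $N\cap F^{-1}(N)$, and transporting back through $\Psi$ proves the theorem.

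The main obstacle is the structural lemma: converting the pointwise consequences of positive definiteness of $\mathscr A,\dots,\mathscr H$ into the uniform ``crossing with no return'' and ``staircase exit‑time with matching range'' statements, valid all the way down to the degenerate fixed point and compatible with the merely bi‑Lipschitz map $\Psi$; a secondary delicate point is the bookkeeping that makes the four quadrant pieces glue into a single homeomorphism along the axes. Much of the needed input — cone and trapping estimates, and strict monotonicity of $|x|$ and $|y|$ under iteration — is precisely what drives the construction of $W^s$ and $W^u$ in Theorem \ref{thmA}, so I expect it can be quoted or lightly adapted rather than redone. (An alternative, closer to the classical Hartman–Grobman scheme, is to cut $F$ off to a map of the plane that equals $F_1\times F_2$ outside a small box and is globally $C^0$‑close to it, and to set $h=(h_1,h_2)$ with $h_1(z)=\lim_{n\to\infty}F_1^{-n}\,\pi_1 F^n(z)$ and $h_2(z)=\lim_{n\to\infty}F_2^{\,n}\,\pi_2 F^{-n}(z)$; here the positive‑definiteness hypotheses are what guarantee convergence of these asymptotic‑phase limits, now at polynomial rather than exponential rate.)
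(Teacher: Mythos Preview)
Your overall strategy --- assign dynamical coordinates via a fundamental-domain construction and transport them to the product model --- is the same circle of ideas the paper uses, but your specific implementation differs from the paper's and has a genuine gap. In your single-section scheme, matching only the forward exit time via $T'\circ h_\Gamma=T$ does not force the backward exit times to agree: nothing you have written gives $(T')^-\circ h_\Gamma=T^-$, and without it the orbit-wise extension $h(F^nq)=(F')^n h_\Gamma(q)$, with $n$ ranging over the full orbit segment of $q$ in $N$, either sends points outside $N'$ (wherever $(T')^-\circ h_\Gamma<T^-$) or fails to cover $Q'_\bullet$ (wherever $(T')^-\circ h_\Gamma>T^-$). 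The interleaving of the jump sets of $T$ and of $T^-$ along a fixed section is a combinatorial datum of the pair (dynamics, section), and once you have spent your freedom in $\Gamma'_\bullet$ on matching the range of $T$ you have none left to match it. A smaller issue: for a discrete map a one-dimensional arc cannot literally ``meet each orbit segment exactly once''; what actually plays the role of a section is a two-dimensional fundamental-domain strip, not a curve.

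The paper (following Morrisey \cite{Mo}) avoids this by decoupling the two directions, and it never straightens $W^s,W^u$. Working in the original coordinates, it takes a short vertical segment $\mathrm{VL}$ through a point of $W^s$, the straight-line homotopy $\gamma_1(\cdot,t)=(1-t)\,\mathrm{id}+t\,F$ from $\mathrm{VL}$ to $F(\mathrm{VL})$, and then all forward iterates $F^i(\gamma_1(\mathrm{VL},t))$; together with $W^u$ these form a vertical foliation $\mathcal F_1$ of a carefully trimmed neighborhood $N$. A horizontal foliation $\mathcal F_2$ is built symmetrically from a horizontal segment through $W^u$, the homotopy to its $F^{-1}$-image, and backward iterates. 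The positive-definiteness hypotheses enter only through the cone invariance $DF(\mathcal C^u(\kappa))\subset\mathcal C^u(\kappa)$ already established around Theorem~\ref{thmA}, used to check that each homotopy arc stays vertical (resp.\ horizontal) and hence that $\mathcal F_1,\mathcal F_2$ are transverse. Every point off $W^s\cup W^u$ then receives coordinates $(i,t;j,r)$, the identical construction is carried out for $F'$, and $H$ is defined by matching these coordinates; the relation $H\circ F=F'\circ H$ reduces to the shift $(i,t;j,r)\mapsto(i{+}1,t;j{-}1,r)$, with no joint exit-time constraint to satisfy. Because the paper works with cones directly rather than straightening, it also never has to argue that the differential estimates survive your bi-Lipschitz $\Psi$ --- a step you flagged as delicate and which turns out to be unnecessary.
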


Let $f$ be a be a homeomorphism of a metric space $(M, d)$. For $\delta>0$ and $m\in \mathbb{N}$,
we say that the set \{$p_k\in M : 0\leq k \leq m$\} is a $\delta$-pseudoorbit of $f$ if
             $$d\left(p_{k+1}, f(p_k)\right) < \delta, \quad 0\leq k \leq m-1.$$
A finite pseudoorbit \{$p_k\in M : 0\leq k \leq m-1$\} is $\epsilon$-shadowed by a point $q$ if
             $$d(f^k(q), p_k) < \epsilon,  \quad 0\leq k \leq m.$$
 We say that $f$ has the {\em finite shadowing property} in a set $K\in M$, if for any $\epsilon>0$ we there is
 $\delta > 0$ such that any finite $\delta$-pseudoorbit \{$p_k\in K : 0\leq k \leq m-1$\} of $f$ can be $\epsilon$-shadowed by some point.

\begin{thm}\label{thmC}
If  $\mathscr{A}   $,  $\mathscr{B} $,  $\mathscr{C} $, $\mathscr{D} $, $\mathscr{E} $, and $\mathscr{H} $  are positive definite,  Then there is an neighborhoods $K$  of $(0, 0)$ such that $F$ has the finite shadowing property in the set $K$.
\end{thm}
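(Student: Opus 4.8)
The plan is to deduce Theorem~\ref{thmC} from Theorem~\ref{thmB}, reducing the two–dimensional statement to a one–dimensional one about the factor maps $F_1$ and $F_2$, and then to prove the finite shadowing property for those two interval maps directly.

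\emph{Step 1: conjugacy invariance and splitting.} I would first record that the finite shadowing property is invariant under topological conjugacy when one works on compact sets. Let $h\colon N\to N'$ be the conjugating homeomorphism of Theorem~\ref{thmB}, so $h\circ F=F'\circ h$ with $F'=F_1\times F_2$, and fix compact neighborhoods $\overline K\subset N$ and $\overline{K'}\subset N'$ of $\mathbf 0$ with $h(\overline K)\subset K'$. Since $h$ and $h^{-1}$ are uniformly continuous on these compacta, a routine $\varepsilon$–$\delta$ adjustment (shrink the target accuracy through $h^{-1}$, apply finite shadowing of $F'$, shrink the pseudoorbit tolerance through $h$, and pull the shadowing point back through $h^{-1}$) shows that it suffices to prove that $F'$ has the finite shadowing property in some neighborhood of $\mathbf 0$ in $\mathbb R^2$. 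Because $F'=F_1\times F_2$ is a product, a $\delta$-pseudoorbit of $F'$ in a box around $\mathbf 0$ projects to $\delta$-pseudoorbits of $F_1$ and of $F_2$, and $(\varepsilon/2)$-shadowing of each coordinate recombines to $\varepsilon$-shadowing of $F'$. So the whole problem reduces to: \emph{$F_1$ and $F_2$ each have the finite shadowing property near $0\in\mathbb R$.}

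\emph{Step 2: $F_1$ is an attracting parabolic interval map, and has finite shadowing.} Since $W^s$ is the graph of the Lipschitz function $\varphi^s$ with $\varphi^s(0)=0$ and small Lipschitz constant near $0$, the map $F_1(x)=x-P(x,\varphi^s(x))+X(x,\varphi^s(x))$ is Lipschitz, fixes $0$, and to leading order is governed by $-P$ restricted to $W^s$. The hypotheses on $\mathscr A,\mathscr C,\mathscr D$ (which already entered the construction of $W^s$ in Theorem~\ref{thmA}) force $P(x,\varphi^s(x))$ to have the sign of $x$ and to be comparable to $|x|^{2k+1}$ near $0$; hence $F_1$ is monotone and \emph{attracting parabolic}: $0<F_1(x)<x$ for small $x>0$, $x<F_1(x)<0$ for small $x<0$, with a one–sided contraction estimate $|F_1(x)-F_1(x')|\le|x-x'|\bigl(1-c\,(\min(|x|,|x'|))^{2k}\bigr)$ on each side of $0$, for some $c>0$. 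For such a map I claim that the shadowing point may be taken to be $p_0$ itself, with $\delta\asymp\varepsilon^{2k+1}$. The argument splits a $\delta$-pseudoorbit $\{p_k\}_{k=0}^m$ at the first time $T$ at which $|p_T|$ drops below a threshold $h_1\asymp\delta^{1/(2k+1)}$: above the threshold $|p_k|$ is strictly decreasing by at least a constant times $\delta$ per step, so $T$ is finite and uniformly short relative to the dynamics; once below the threshold both the pseudoorbit and the genuine orbit $F_1^k(p_0)$ are trapped in a $\asymp\delta^{1/(2k+1)}$-neighborhood of $0$ for all later times, so their difference stays $O(\delta^{1/(2k+1)})$. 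On the head phase one controls $d_k:=F_1^k(p_0)-p_k$ through the recursion $|d_{k+1}|\le|d_k|(1-c_k)+\delta$ with $c_k\gtrsim\delta^{2k/(2k+1)}$ (obtained by a bootstrap on the tighter estimate $|d_k|=O(\delta^{1/(2k+1)})$, which keeps $F_1^k(p_0)$ bounded below by half the threshold), giving $|d_k|=O(\delta/\delta^{2k/(2k+1)})=O(\delta^{1/(2k+1)})$ throughout. Choosing $\delta$ so that the implied constant times $\delta^{1/(2k+1)}$ is $<\varepsilon$ then gives $|F_1^k(p_0)-p_k|<\varepsilon$ for all $0\le k\le m$, uniformly in $m$. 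The map $F_2$ is the mirror case: $0$ is repelling along $W^u$ (now using the hypotheses on $\mathscr B,\mathscr E,\mathscr H$), and because $DF(\mathbf 0)=\mathrm{id}$ makes $F$ a local diffeomorphism at $\mathbf 0$, the inverse $F_2^{-1}$ is well defined near $0$ and is an attracting parabolic map of the same type, hence has the finite shadowing property by what precedes; reversing pseudoorbits in time transfers this to $F_2$.

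\emph{Step 3: conclusion.} Recombining the two coordinates gives finite shadowing of $F'$ on a box around $\mathbf 0$, and transporting back through $h$ produces a neighborhood $K$ of $\mathbf 0$ on which $F$ has the finite shadowing property. The step I expect to be the main obstacle is Step~2: extracting from the tensor hypotheses the precise one–sided bounds on $F_1$ (monotonicity, the lower bound $F_1(x)\ge x-C|x|^{2k+1}$, and the contraction estimate with exponent $2k$) while $\varphi^s$ is merely Lipschitz, and then pushing the head/tail estimate through uniformly in the pseudoorbit length $m$ — the crux being that $\delta$ must depend on $\varepsilon$ alone, which is exactly what the $\delta\asymp\varepsilon^{2k+1}$ scaling and the ``trapped near $0$'' mechanism are designed to guarantee. (Note that this is also why one should not expect the full, infinite shadowing property here.)
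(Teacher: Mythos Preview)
Your plan is sound and leads to a correct proof, but it takes a genuinely different route from the paper. The paper never invokes Theorem~\ref{thmB}; instead it quotes a criterion of Petrov--Pilyugin (stated as Lemma~\ref{lem7}) that gives sufficient local conditions on a planar homeomorphism --- control of $F(S(\mathbf p,\delta))$ and $F^{-1}(S(F(\mathbf p),\delta))$ inside $S(\cdot,\Delta)$, together with three inequalities on $|g(p_x+v,p_y+w)-g(p_x,p_y)|$ and $|f(p_x+v,p_y+w)-f(p_x,p_y)|$ --- and then verifies each clause directly for the original map $F$ using the estimates~(\ref{al1})--(\ref{al2}) and the elementary bounds $\|DF^{\pm1}\|\le1+\alpha$, $|\partial Q/\partial x|,|\partial Y/\partial x|\le\tfrac14$ on a small neighborhood. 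Every verification is a one-line integral along a straight segment, in the same spirit as Lemma~\ref{lem2}; no conjugacy and no one-dimensional reduction are used.

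Your approach is more structural: it explains the finite shadowing as a consequence of the decoupling $F'=F_1\times F_2$, and your head/tail analysis with threshold $h_1\asymp\delta^{1/(2k+1)}$ makes the scaling $\delta\asymp\varepsilon^{2k+1}$ explicit, which the paper's proof leaves buried inside the black-box Lemma~\ref{lem7}. The cost is that you spend Theorem~\ref{thmB} --- the heaviest result in the paper --- plus the one-dimensional argument, and you must watch one detail you already flagged: Theorem~\ref{thmA} only gives $\mathrm{Lip}(\varphi^s)<\alpha$ with $\alpha>1$, so to get the contraction estimate for $F_1$ you need $\partial P/\partial x+\lambda\,\partial P/\partial y$ to remain positive definite for $|\lambda|\le\alpha$; this is available because in the proof of Lemma~\ref{lem2}(3) the constant $\alpha>1$ can be taken arbitrarily close to $1$, within the open range where $\partial P/\partial x\pm\alpha\,\partial P/\partial y$ stays positive definite. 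In short: the paper's argument is shorter and independent of Theorem~\ref{thmB}; yours is longer but more transparent about why the result holds and what the $\varepsilon$--$\delta$ relation is.
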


\begin{rem}
If $P=x^{2n+1}$ and $Q=y^{2m+1}$ for some $n,m\in \mathbb{N}$, (\ref{eq1}) is the main example in \cite{PP}. So, Theorem \ref{thmC} extends some result (e.g., Example 2) of \cite{PP} to more general situations.
\end{rem}
\section{Preliminaries }\label{sec2}
We will recall some basic facts on tensors in this section (e.g., see \cite{QL,QCC}).
A real {\em tensor} $\mathscr{A}=\left(a_{i_1 \ldots i_m}\right)$  is a hypermatrix or a tentrix, which  represents a multi-array of entries $a_{i_1 \ldots i_m} \in \mathbb{R}$, where $i_j=1, \ldots, n_j$ for $j=1, \ldots, m$. Here, $m$ is called the order of tensor $\mathscr{A}$ and $\left(n_1, \ldots, n_m\right)$ is the dimension of $\mathscr{A}$. When $n=n_1=\cdots=n_m, \mathscr{A}$ is called an $m$th order $n$-dimensional tensor. The set of all $m$ th order $n$-dimensional real tensors is denoted as $T_{m, n}$.

For any tensor $\mathscr{A}=\left(a_{i_1 \ldots i_m}\right) \in T_{m, n}$, if its entries $a_{i_1 \ldots i_m}$ 's are invariant under any permutation of its indices, then $\mathscr{A}$ is called a {\em symmetric tensor}. The set of all $m$th order $n$-dimensional real symmetric tensors is denoted as $S_{m, n}$.

Given $\mathscr{A}=\left(a_{i_1 \ldots i_m}\right) \in T_{m, n}$ and $\mathbf{x}=(x_1, \cdots, x_n)\in \mathbb{R}^n$, define the product
$$
\mathscr{A} \mathbf{x}^m := \sum_{i_1, \ldots, i_m=1}^n a_{i_1 \cdots i_m} x_{i_1} \cdots x_{i_m}\in\mathbb{R}.
$$

\begin{ex}\label{ex3}
(\cite{QL}) (1) Suppose that a function $f: \mathbb{R}^n \rightarrow \mathbb{R}$ has continuous $m$th order derivatives. Then its $m$th order derivative $\nabla^{(m)} f(\mathbf{x})$ at any $\mathbf{x} \in \mathbb{R}^n$ is an $m$th order $n$-dimensional real symmetric tensor.

(2) For any  multi-variate homogeneous polynomial form
$$
\sum_{i_1, \ldots, i_m=1}^n a_{i_1 \ldots i_m} x_{i_1} \ldots x_{i_m},
$$
denote its coefficient tensor by $\mathscr{A}=$ $\left(a_{i_1 \ldots i_m}\right) \in T_{m, n}$. Then there is a unique symmetric tensor $\mathscr{B}=\left(b_{i_1 \ldots i_m}\right) \in S_{m, n}$ such that
$$
\sum_{i_1, \ldots, i_m=1}^n a_{i_1 \ldots i_m} x_{i_1} \ldots x_{i_m} \equiv \sum_{i_1, \ldots, i_m=1}^n b_{i_1 \ldots i_m} x_{i_1} \ldots x_{i_m} .
$$
We call $\mathscr{B}$ the symmetrization of $\mathscr{A}$, denoted as $\mathscr{B}=\operatorname{Sym}(\mathscr{A})$.
\end{ex}

An $m$th order $n$-dimensional real tensor $\mathscr{A}=\left(a_{i_1 \cdots i_n}\right) \in T_{m, n}$ is said to be {\em positive definite}  if
$$
\mathscr{A} \mathbf{x}^m> 0, \ \forall \mathbf{x} \in \mathbb{R}^n,\mathbf{x} \neq 0 .
$$
We call an multi-variate homogeneous polynomial is positive definite if its symmetric coefficient tensor is positive definite.

Obviously, for a positive definite tensor $\mathscr{A}  \in T_{m, n}$, the  order $m$ should be even.  For positive definite tensors, the symmetry is not necessarily required, and for any nonsymmetric tensor $\mathscr{A}$, its positive definiteness can be fully characterized by that of its symmetrization $\operatorname{Sym}(\mathscr{A})$ as stated in Example \ref{ex3} above.

By using the Z-eigenvalues of a tensor, the positive definiteness has the following equivalent characterizations.

\begin{lem}\label{lem4}
(see Theorem 2.18 of \cite{QL})
Suppose that $\mathscr{A}=\left(a_{i_1 \cdots i_m}\right) \in S_{m, n}$. When $m$ is even, $\mathscr{A}$ is positive definite   if and only if
$$
\begin{aligned}
& \lambda_{Z \min }(\mathscr{A})=\min\left\{ \mathscr{A} \mathbf{x}^m:\  \sum_{i=1}^n x_i^2=1, \mathbf{x} \in \mathbb{R}^n \right\}>0.\\
\end{aligned}
$$
\end{lem}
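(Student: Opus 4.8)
The plan is to trade the quantifier ``for all $\mathbf{x}\in\mathbb{R}^n$ with $\mathbf{x}\neq\mathbf{0}$'' appearing in the definition of positive definiteness for a statement on the compact unit sphere $\Sigma:=\{\mathbf{x}\in\mathbb{R}^n:\sum_{i=1}^n x_i^2=1\}$, exploiting that $\mathbf{x}\mapsto\mathscr{A}\mathbf{x}^m$ is a homogeneous polynomial of degree $m$. Since this map is continuous and $\Sigma$ is compact, the value $\lambda:=\min\{\mathscr{A}\mathbf{x}^m:\mathbf{x}\in\Sigma\}$ is well defined and is attained at some $\mathbf{x}^{\ast}\in\Sigma$. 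I would first establish the equivalence ``$\mathscr{A}$ positive definite $\iff\lambda>0$'' and then identify $\lambda$ with the smallest Z-eigenvalue $\lambda_{Z\min}(\mathscr{A})$.

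For the forward implication: if $\mathscr{A}$ is positive definite then $\mathscr{A}(\mathbf{x}^{\ast})^m>0$ because $\mathbf{x}^{\ast}\neq\mathbf{0}$, hence $\lambda>0$. For the converse: given an arbitrary $\mathbf{x}\neq\mathbf{0}$, write $\mathbf{x}=r\mathbf{u}$ with $r=\big(\sum_i x_i^2\big)^{1/2}>0$ and $\mathbf{u}\in\Sigma$; degree-$m$ homogeneity gives $\mathscr{A}\mathbf{x}^m=r^m\,\mathscr{A}\mathbf{u}^m\ge r^m\lambda>0$, so $\mathscr{A}$ is positive definite. Modulo the identity $\lambda=\lambda_{Z\min}(\mathscr{A})$, this is already the assertion.

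To obtain that identity I would apply the Lagrange multiplier rule at the minimizer $\mathbf{x}^{\ast}$: since $\nabla\big(\mathscr{A}\mathbf{x}^m\big)=m\,\mathscr{A}\mathbf{x}^{m-1}$ and $\nabla\big(\sum_i x_i^2\big)=2\mathbf{x}$, there is a scalar $\mu$ with $\mathscr{A}(\mathbf{x}^{\ast})^{m-1}=\mu\,\mathbf{x}^{\ast}$; as $\|\mathbf{x}^{\ast}\|_2=1$, the pair $(\mu,\mathbf{x}^{\ast})$ is a Z-eigenpair of $\mathscr{A}$, and contracting this vector identity once more against $\mathbf{x}^{\ast}$ yields $\mu=\mathscr{A}(\mathbf{x}^{\ast})^m=\lambda$. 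Hence $\lambda$ is a Z-eigenvalue, so $\lambda\ge\lambda_{Z\min}(\mathscr{A})$; conversely any Z-eigenpair $(\nu,\mathbf{v})$ with $\|\mathbf{v}\|_2=1$ satisfies $\mathscr{A}\mathbf{v}^m=\nu\ge\lambda$, whence $\lambda_{Z\min}(\mathscr{A})\ge\lambda$, and the two bounds force $\lambda=\lambda_{Z\min}(\mathscr{A})$ (in particular the Z-spectrum is nonempty, so $\lambda_{Z\min}$ makes sense).

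The argument is essentially compactness plus scaling, so I do not expect a genuine obstacle; the one point that deserves care is the parity hypothesis. When $m$ is odd, $\mathscr{A}(-\mathbf{x})^m=-\mathscr{A}\mathbf{x}^m$, so the form takes both signs and no symmetric tensor of odd order can be positive definite --- this is precisely why the statement is restricted to even $m$, and it should be flagged before the equivalence is invoked. If one wishes to bypass the differentiability used in the Lagrange step, one may instead adopt ``$\lambda_{Z\min}(\mathscr{A})=\min_{\mathbf{x}\in\Sigma}\mathscr{A}\mathbf{x}^m$'' as the working definition, after which the lemma reduces to the short compactness-and-homogeneity argument of the second paragraph above.
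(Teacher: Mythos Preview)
Your argument is correct. Note, however, that the paper does not supply its own proof of this lemma: it is quoted verbatim as Theorem~2.18 of \cite{QL}, and the displayed formula in the statement already \emph{defines} $\lambda_{Z\min}(\mathscr{A})$ to be $\min\{\mathscr{A}\mathbf{x}^m:\|\mathbf{x}\|_2=1\}$, so there is no separate identification with a Z-eigenvalue to perform. In that light, the Lagrange-multiplier paragraph you wrote is unnecessary for the lemma as stated --- your final remark (adopting the minimum as the working definition and invoking only compactness of the sphere plus degree-$m$ homogeneity) is exactly what is required, and that short argument is both complete and the standard one.
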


By the above Lemma \ref{lem4}, one can easily get the following
\begin{lem}\label{lem5}
Let  $\mathscr{A}=\left(a_{i_1 \cdots i_m}\right) \in S_{m, n}$ be a positive definite. There is $\varepsilon>0$ such that if $\mathscr{B}=\left(b_{i_1 \cdots i_m}\right) \in S_{m, n}$  satisfies
$|a_{i_1 \cdots i_m}-b_{i_1 \cdots i_m}|<\varepsilon, \forall 1\leq i_1, \cdots, i_m\leq n$, then
$\mathscr{B}$ is positive definite.
\end{lem}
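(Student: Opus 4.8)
The plan is to derive Lemma \ref{lem5} directly from the variational characterization of positive definiteness in Lemma \ref{lem4}, together with a uniform estimate comparing the two homogeneous forms on the unit sphere. First I would record that, since $\mathscr{A}$ is positive definite, its order $m$ is even, so Lemma \ref{lem4} applies and gives
\[
\lambda_0 := \lambda_{Z\min}(\mathscr{A}) = \min\Bigl\{ \mathscr{A}\mathbf{x}^m : \textstyle\sum_{i=1}^n x_i^2 = 1,\ \mathbf{x}\in\mathbb{R}^n \Bigr\} > 0,
\]
the minimum being attained because the unit sphere $S^{n-1}\subset\mathbb{R}^n$ is compact and $\mathbf{x}\mapsto\mathscr{A}\mathbf{x}^m$ is continuous (it is a polynomial).

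Next I would bound the difference $\mathscr{A}\mathbf{x}^m-\mathscr{B}\mathbf{x}^m$ uniformly over $S^{n-1}$. Expanding the tensor--vector products and using the hypothesis $|a_{i_1\cdots i_m}-b_{i_1\cdots i_m}|<\varepsilon$ for all indices, for every $\mathbf{x}$ with $\sum_i x_i^2=1$ one gets
\[
\bigl|\mathscr{A}\mathbf{x}^m-\mathscr{B}\mathbf{x}^m\bigr| \le \sum_{i_1,\ldots,i_m=1}^n \bigl|a_{i_1\cdots i_m}-b_{i_1\cdots i_m}\bigr|\,|x_{i_1}|\cdots|x_{i_m}| < \varepsilon\Bigl(\sum_{i=1}^n|x_i|\Bigr)^m \le \varepsilon\,n^{m/2},
\]
where the last step is the Cauchy--Schwarz inequality $\sum_i|x_i|\le\sqrt{n}\,(\sum_i x_i^2)^{1/2}=\sqrt{n}$. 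I would then fix $\varepsilon := \lambda_0/(2n^{m/2})$ (any positive number below $\lambda_0 n^{-m/2}$ works). For any $\mathscr{B}\in S_{m,n}$ whose entries are within $\varepsilon$ of those of $\mathscr{A}$ and any unit vector $\mathbf{x}$, this yields $\mathscr{B}\mathbf{x}^m \ge \mathscr{A}\mathbf{x}^m-\varepsilon n^{m/2} \ge \lambda_0-\lambda_0/2 = \lambda_0/2 > 0$, hence $\lambda_{Z\min}(\mathscr{B})>0$; since $\mathscr{B}$ is symmetric of even order $m$, the reverse implication in Lemma \ref{lem4} gives that $\mathscr{B}$ is positive definite.

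As for difficulties: there is essentially no obstacle here — the content is a continuity/compactness remark, and the only care needed is to make the comparison estimate \emph{uniform} on the unit sphere (so that a single $\varepsilon$ serves for all admissible $\mathscr{B}$) and to invoke Lemma \ref{lem4} in both directions. An equivalent route would be a proof by contradiction: if no such $\varepsilon$ existed, one could pick symmetric tensors $\mathscr{B}_k\to\mathscr{A}$ entrywise and unit vectors $\mathbf{x}_k$ with $\mathscr{B}_k\mathbf{x}_k^m\le 0$, extract a convergent subsequence $\mathbf{x}_k\to\mathbf{x}_*\in S^{n-1}$, and pass to the limit to get $\mathscr{A}\mathbf{x}_*^m\le 0$, contradicting $\lambda_0>0$. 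This uses the same compactness of the sphere and gives nothing stronger, so I would present the direct estimate above.
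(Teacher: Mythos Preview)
Your argument is correct and is precisely the approach the paper intends: the paper does not spell out a proof but simply remarks that Lemma~\ref{lem5} follows easily from Lemma~\ref{lem4}, and your uniform estimate on the unit sphere together with the two directions of Lemma~\ref{lem4} is exactly how one makes that remark precise.
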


\begin{lem}\label{lem6}
$\frac{\partial P}{\partial x} $ and $ \frac{\partial P}{\partial x} \pm \frac{\partial P}{\partial y}$ are positive definite if and only if   $ \frac{\partial P}{\partial x} +\lambda \frac{\partial P}{\partial y}$ is positive definite for any $\lambda\in [-1,1]$.
\end{lem}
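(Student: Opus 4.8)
The plan is to derive the equivalence from the concavity of the single-variable function
\[
\phi(\lambda)\ :=\ \min\Bigl\{\, \frac{\partial P}{\partial x}(\mathbf x) + \lambda\,\frac{\partial P}{\partial y}(\mathbf x)\ :\ \mathbf x=(x_1,x_2),\ x_1^2+x_2^2=1 \,\Bigr\},\qquad \lambda\in\mathbb R .
\]
Since $P$ is homogeneous of degree $2k+1$, the partial derivatives $\frac{\partial P}{\partial x}$ and $\frac{\partial P}{\partial y}$ are homogeneous of the even degree $2k$, hence so is $\frac{\partial P}{\partial x}+\lambda\frac{\partial P}{\partial y}$ for each $\lambda$. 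By the remark preceding Lemma \ref{lem4} (positive definiteness of a tensor is characterized by that of its symmetrization, and symmetrization does not change the values of the associated form) together with Lemma \ref{lem4}, the form $\frac{\partial P}{\partial x}+\lambda\frac{\partial P}{\partial y}$ is positive definite if and only if $\phi(\lambda)>0$; the minimum defining $\phi(\lambda)$ is attained because the form is continuous and the unit circle is compact.

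The key observation is that for each fixed $\mathbf x$ on the unit circle the map $\lambda\mapsto \frac{\partial P}{\partial x}(\mathbf x)+\lambda\frac{\partial P}{\partial y}(\mathbf x)$ is affine in $\lambda$, so $\phi$, being a pointwise infimum of affine functions, is concave on $\mathbb R$. Granting this, the \emph{if} direction is trivial: if $\frac{\partial P}{\partial x}+\lambda\frac{\partial P}{\partial y}$ is positive definite for all $\lambda\in[-1,1]$, then specializing to $\lambda=0,1,-1$ shows that $\frac{\partial P}{\partial x}$ and $\frac{\partial P}{\partial x}\pm\frac{\partial P}{\partial y}$ are positive definite. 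For the \emph{only if} direction, the hypotheses give in particular $\phi(1)>0$ and $\phi(-1)>0$ (in fact positive definiteness of $\frac{\partial P}{\partial x}$ is redundant, being $\frac12$ the sum of the other two forms); for $\lambda\in[-1,1]$ write $\lambda=t\cdot 1+(1-t)\cdot(-1)$ with $t=\frac{1+\lambda}{2}\in[0,1]$, and concavity yields
\[
\phi(\lambda)\ \ge\ t\,\phi(1)+(1-t)\,\phi(-1)\ >\ 0 ,
\]
so $\frac{\partial P}{\partial x}+\lambda\frac{\partial P}{\partial y}$ is positive definite. Equivalently, one can avoid $\phi$ altogether and argue from the identity $\frac{\partial P}{\partial x}+\lambda\frac{\partial P}{\partial y}=\frac{1+\lambda}{2}\bigl(\frac{\partial P}{\partial x}+\frac{\partial P}{\partial y}\bigr)+\frac{1-\lambda}{2}\bigl(\frac{\partial P}{\partial x}-\frac{\partial P}{\partial y}\bigr)$, exhibiting it as a nonnegative linear combination of positive definite forms.

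I do not expect any genuine obstacle here; the proof is short and the only care needed is bookkeeping: checking that $2k$ is even so that Lemma \ref{lem4} applies, reading positive definiteness of the a priori non-symmetric expression $\frac{\partial P}{\partial x}+\lambda\frac{\partial P}{\partial y}$ through its symmetrization, and noting that the minimum in the definition of $\phi$ is attained, so that $\phi(\lambda)>0$ is genuinely equivalent to the form being positive on $\mathbb R^2\setminus\{\mathbf 0\}$.
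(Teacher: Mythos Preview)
Your proposal is correct and takes essentially the same approach as the paper: the key step is expressing $\frac{\partial P}{\partial x}+\lambda\frac{\partial P}{\partial y}$ as a convex combination of positive definite forms. The paper writes it, for $\lambda\in(0,1)$, as $(1-\lambda)\frac{\partial P}{\partial x}+\lambda\bigl(\frac{\partial P}{\partial x}+\frac{\partial P}{\partial y}\bigr)$ (and symmetrically for $\lambda<0$), whereas you use the pair $\frac{\partial P}{\partial x}\pm\frac{\partial P}{\partial y}$ and additionally frame the argument via concavity of $\phi$; your observation that positive definiteness of $\frac{\partial P}{\partial x}$ is redundant is a nice extra the paper does not record.
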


\begin{proof}
We firstly suppose that $\frac{\partial P}{\partial x} $ and $ \frac{\partial P}{\partial x} \pm \frac{\partial P}{\partial y}$ are positive definite. It is obviously that  $ \frac{\partial P}{\partial x} +\lambda \frac{\partial P}{\partial y}$ is positive definite for $\lambda=0, \pm 1$. For $\lambda\in (0,1)$, noting that $ \frac{\partial P}{\partial x} +\lambda \frac{\partial P}{\partial y}=  (1-\lambda)\frac{\partial P}{\partial x}+ \lambda( \frac{\partial P}{\partial x} + \frac{\partial P}{\partial y})$,  the positive definiteness of $\frac{\partial P}{\partial x} $ and $ \frac{\partial P}{\partial x} +\frac{\partial P}{\partial y}$ implies that $ \frac{\partial P}{\partial x} +\lambda \frac{\partial P}{\partial y}$ is positive definite. The case of $\lambda\in (-1,0)$ can be proved similarly.
\end{proof}

Similarly to Lemma \ref{lem6} we have
\begin{rem}\label{rem3}
$\frac{\partial Q}{\partial y} $ and $ \frac{\partial Q}{\partial y} \pm \frac{\partial Q}{\partial x}$ are positive definite if and only if   $ \frac{\partial  Q}{\partial y} +\lambda \frac{\partial Q}{\partial x}$ is positive definite for any $\lambda\in [-1,1]$.
\end{rem}

Moreover, if   $\mathscr{A}   $,  $\mathscr{B} $,  $\mathscr{C} $, $\mathscr{D} $, $\mathscr{E} $ and $\mathscr{H} $  are positive definite, since the orders of  \( X(x,y) \) and \( Y(x,y) \) are higher than the orders of \( P(x,y) \) and \( Q(x,y) \)  respectively, there exists a neighborhood \( N \) of \( (0,0) \) such that, for any \( \mathbf{p} \in N \setminus \{(0,0)\}\), the following inequalities hold.
        \begin{align}
            0<&\left(\frac{\partial P}{\partial x} - \frac{\partial X}{\partial x} + \lambda \frac{\partial P}{\partial y} - \lambda \frac{\partial X}{\partial y}\right)(\mathbf{p}) <1,\label{al1}\\
            0<&\left(\frac{\partial Q}{\partial y} + \frac{\partial Y}{\partial y} + \lambda \frac{\partial Q}{\partial x} + \lambda \frac{\partial Y}{\partial x}\right)(\mathbf{p})<1.\label{al2}
       \end{align}

\section{Stable manifolds}

Let $\alpha$ and $\delta$ be positive. Consider the following sets:
$$
\begin{aligned}
\mathcal{C}^s(1)&=\{(x,y)\in \mathbb{R}^2:\ |y|\leq |x|\},\\
\partial \mathcal{C}^s(1)&=\{(x,y)\in \mathbb{R}^2:\ |y|= |x|\},\\
\mathcal{C}^s(1,\delta)&=\{(x,y)\in \mathbb{R}^2:\ |y|\leq|x|\leq \delta\},\\
\mathcal{C}^u(1,\delta)&=\{(x,y)\in \mathbb{R}^2:\ |x|\leq|y|\leq \delta\},\\
\mathcal{C}^u(\alpha)&=\{(x,y)\in \mathbb{R}^2:\ |y|\geq \alpha|x|\}.
\end{aligned}
$$
For $A\subset \mathbb{R}^2$, the set of $\alpha$-vertical arcs in $A$ is
$$ V(A,\alpha)=\{\Gamma\subset A:\ \Gamma \text{ is a } C^1 \text{ arc}, T_x\Gamma \subset \mathcal{C}^u(\alpha)\}.$$

To prove the stable and unstable manifolds at $(0,0)$, we need the following lemma.
\begin{lem}\label{lem2}
There exist $\alpha>1$ and $\delta>0$ such that:

(1) for any $\mathbf{p}=(p_x,p_y)\in \mathcal{C}^s(1,\delta)\setminus\{(0,0)\}$, we have that $0 < f(\mathbf{p}) < p_x$ if $p_x > 0$, and $p_x < f(\mathbf{p}) < 0$ if $p_x < 0$;

(2) $F(\partial \mathcal{C}^s(1,\delta))\cap \mathcal{C}^s(1)=(0,0)$;

(3) $D_{\mathbf{p}}F (\mathcal{C}^u(\alpha))\subset \mathcal{C}^u(\alpha)$ for any $\mathbf{p}=(p_x,p_y)\in \mathcal{C}^s(1,\delta)\setminus\{(0,0)\}$;

(4) $|g(\mathbf{p})-g(\mathbf{q}) |\geq |  {p}_y -  {q}_y  |$,  if $\mathbf{p}=(p_x,p_y), \mathbf{q}=(q_x,q_y) \in \Gamma$ for  $\Gamma \in V(\mathcal{C}^s(1,\delta),\alpha)$.
\end{lem}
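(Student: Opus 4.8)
The plan is to abbreviate $\tilde P:=P-X$ and $\tilde Q:=Q+Y$, so that $f=x-\tilde P$, $g=y+\tilde Q$, and to extract everything from uniform bounds on the partial derivatives of $\tilde P$ and $\tilde Q$ on the truncated cone $\mathcal C^s(1,\delta)$. Before fixing $\alpha$ and $\delta$ I would record the structural facts that make the argument work: since $\mathscr A,\mathscr C,\mathscr D$ are positive definite, $\partial_x P(\mathbf x)>|\partial_y P(\mathbf x)|>0$ for every $\mathbf x\neq 0$, so by homogeneity and compactness of the unit circle $\theta_2:=\max_{|\mathbf x|=1}|\partial_y P(\mathbf x)|/\partial_x P(\mathbf x)$ satisfies $\theta_2<1$; symmetrically $\theta_1:=\max_{|\mathbf x|=1}|\partial_x Q(\mathbf x)|/\partial_y Q(\mathbf x)<1$ from $\mathscr B,\mathscr E,\mathscr H$. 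Because $X$ and $Y$ are of strictly higher order, after enlarging $\theta_1,\theta_2$ slightly (still $<1$) there is a neighborhood of $\mathbf 0$ on which $\tilde P_x>0$, $\tilde Q_y>0$, $|\tilde P_y|\le\theta_2\tilde P_x$, $|\tilde Q_x|\le\theta_1\tilde Q_y$, and, by \eqref{al1}--\eqref{al2}, $0<\tilde P_x<1$, $0<\tilde Q_y<1$. I then fix $\alpha\in(1,1/\theta_2)$ and take $\delta>0$ small enough that $\mathcal C^s(1,\delta)$ lies in that neighborhood, \eqref{al1}--\eqref{al2} hold on it for all $\lambda\in[-1,1]$, and $(1+\alpha)\tilde P_x<1$ there.

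For (1) and (2) I would integrate along the radial segment $t\mapsto t\mathbf p$, $t\in[0,1]$. For $\mathbf p=(p_x,p_y)\in\mathcal C^s(1,\delta)\setminus\{\mathbf 0\}$ one has $p_x\neq 0$ and $\lambda:=p_y/p_x\in[-1,1]$; the map $t\mapsto\tilde P(t\mathbf p)$ vanishes at $t=0$ and has derivative $p_x\,(\tilde P_x+\lambda\tilde P_y)(t\mathbf p)$, which by \eqref{al1} lies strictly between $0$ and $p_x$ for $t\in(0,1]$, so $\tilde P(\mathbf p)=p_x-f(\mathbf p)$ lies strictly between $0$ and $p_x$; this is precisely (1). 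When additionally $|p_x|=|p_y|$, applying the same reasoning to $t\mapsto\tilde Q(t\mathbf p)$ with $\mu:=p_x/p_y\in[-1,1]$ and \eqref{al2} shows $\tilde Q(\mathbf p)=g(\mathbf p)-p_y$ lies strictly between $0$ and $p_y$, hence $|g(\mathbf p)|>|p_y|=|p_x|>|f(\mathbf p)|$ and $F(\mathbf p)\notin\mathcal C^s(1)$; together with $F(\mathbf 0)=\mathbf 0\in\mathcal C^s(1)$ this yields (2).

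For (3) the key observation is that $D_{\mathbf p}F$, whose first row is $(1-\tilde P_x,\,-\tilde P_y)$ and second row $(\tilde Q_x,\,1+\tilde Q_y)$, is linear and odd, while $\mathcal C^u(\alpha)$ is the union of the two convex cones $\{v\ge\alpha|u|\}$ and $\{v\le-\alpha|u|\}$, the first generated by the rays through $(1,\alpha)$ and $(-1,\alpha)$. So it suffices to show that $D_{\mathbf p}F(1,\alpha)$ and $D_{\mathbf p}F(1,-\alpha)$ lie in $\mathcal C^u(\alpha)$ with second coordinates positive and negative respectively: then $D_{\mathbf p}F(-1,\alpha)=-D_{\mathbf p}F(1,-\alpha)$ lies in $\{v\ge\alpha|u|\}$, and convexity of that cone propagates the inclusion to all of it, the other cone following by oddness. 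Writing out the two vectors, their first coordinates are positive (here $(1+\alpha)\tilde P_x<1$ is used), the second coordinates have the asserted signs since $1+\tilde Q_y>|\tilde Q_x|$, and in both cases the inequality $|\text{2nd coord.}|\ge\alpha\,|\text{1st coord.}|$ reduces, after $|\tilde Q_x|\le\theta_1\tilde Q_y$ and $|\tilde P_y|\le\theta_2\tilde P_x$, to $(\alpha-\theta_1)\tilde Q_y+\alpha(1-\alpha\theta_2)\tilde P_x\ge 0$, which holds because $\alpha>1>\theta_1$ and $\alpha\theta_2<1$.

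For (4): every tangent line of $\Gamma\in V(\mathcal C^s(1,\delta),\alpha)$ lies in the vertical cone $\mathcal C^u(\alpha)$, so $\Gamma$ is never horizontal and is therefore a $C^1$ graph $x=\gamma(y)$ over an interval of $y$-values with $|\gamma'|\le 1/\alpha$; for $\mathbf p=(\gamma(p_y),p_y)$ and $\mathbf q=(\gamma(q_y),q_y)$ on $\Gamma$,
\[
\frac{d}{dy}\,g(\gamma(y),y)=1+\tilde Q_y(\gamma(y),y)+\tilde Q_x(\gamma(y),y)\,\gamma'(y)\ \ge\ 1+\Bigl(1-\tfrac{\theta_1}{\alpha}\Bigr)\tilde Q_y\ \ge\ 1,
\]
so $y\mapsto g(\gamma(y),y)$ is monotone with derivative at least $1$, and integrating from $p_y$ to $q_y$ gives $|g(\mathbf p)-g(\mathbf q)|\ge|p_y-q_y|$. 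The one genuinely delicate point is (3): one must produce a single $\alpha>1$ that works uniformly over the whole punctured truncated cone, and this succeeds only because the bound $|\partial_y P|\le\theta_2\,\partial_x P$ forced by positive definiteness of $\mathscr C$ and $\mathscr D$ is \emph{uniformly} strict, which is exactly what permits taking $\alpha$ slightly above $1$ while keeping $\alpha\theta_2<1$; the remaining verifications are routine estimates with \eqref{al1}--\eqref{al2}.
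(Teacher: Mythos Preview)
Your proof is correct and follows the paper's strategy closely: radial integration for (1) and (2), checking the extreme rays of the vertical cone for (3), and an integration estimate for (4). Two minor implementation differences are worth noting. For (3), the paper selects $\alpha>1$ by invoking Lemma~\ref{lem5} (positive definiteness is open) so that $\partial_xP\pm\alpha\,\partial_yP$ and $\alpha\,\partial_yQ\pm\partial_xQ$ remain positive definite, and then simply reads off $0<u_1<1$ and $v_1>\alpha$ for $D_{\mathbf p}F(1,\alpha)^T=(u_1,v_1)$, which gives $v_1>\alpha u_1$ in one line; your route via the uniform ratios $\theta_1,\theta_2<1$ and the explicit lower bound $(\alpha-\theta_1)\tilde Q_y+\alpha(1-\alpha\theta_2)\tilde P_x\ge0$ is an equivalent reformulation of the same idea. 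For (4), the paper integrates $g$ along the straight chord from $\mathbf p$ to $\mathbf q$ (noting that the secant of an $\alpha$-vertical arc satisfies $|q_x-p_x|\le\alpha^{-1}|q_y-p_y|$, so \eqref{al2} applies with $|\lambda|\le\alpha^{-1}$), whereas you integrate along $\Gamma$ itself after parametrizing it as a graph $x=\gamma(y)$; both are valid, and your version has the small advantage that the path of integration stays in $\mathcal C^s(1,\delta)$ automatically.
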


\begin{proof}(1) Let $\delta_1>0$  and  $\mathbf{p}=(p_x,p_y)\in \mathcal{C}^s(1,\delta_1)\setminus\{(0,0)\}$. Put $p_y=\lambda p_x$  for some $\lambda\in[-1,1]$. Then one has
$$
\begin{aligned}
f(\mathbf{p})&=f(\mathbf{p})-f(\mathbf{0})\\&=\int_0^{p_x}\frac{d}{dt}f( t, \lambda t )dt\\
&=\int_0^{p_x} \left[1-\left(\frac{\partial P}{\partial x} -\frac{\partial X}{\partial x} +\lambda \frac{\partial P}{\partial y}  -\lambda\frac{\partial X}{\partial y}\right)( t, \lambda t ) \right]dt.
\end{aligned}
$$
If $\delta_1$ is small enough, it follows from (\ref{al1}) that
  $0<f(\mathbf{p})< p_x$ if $p_x>0$, and   $p_x<f(\mathbf{p})< 0$ if $p_x<0$.
  
(2) Take $\delta_2\in(0,\delta_1]$. By the above conclusion (1),  it suffices to show that for any $\mathbf{q}=(q_x, q_y)\in \partial \mathcal{C}^s(1,\delta_2)\setminus\{(0,0)\}$,
\begin{equation}\label{eq250207-1}
g(\mathbf{q})> q_y\text{ if } q_y>0,\text{ and }   g(\mathbf{q})< q_y \text{ if } q_y<0.
\end{equation}
Let us note that $q_x=q_y$ as $\mathbf{q}=(q_x, q_y)\in \partial \mathcal{C}^s(1,\delta_2)$, and
$$
\begin{aligned}
g(\mathbf{q})&=g(\mathbf{q})-g(\mathbf{0})\\&=\int_0^{q_y}\frac{d}{dt}g( t, t )dt\\
&=\int_0^{q_y} \left[1+ \left(\frac{\partial Q}{\partial y} +\frac{\partial Y}{\partial y} +  \frac{\partial Q}{\partial x}  + \frac{\partial Y}{\partial x}\right)( t, t ) \right]dt.
\end{aligned}
$$
If $\delta_2$ is small enough, then (\ref{eq250207-1}) holds by (\ref{al2}).
(3)
Since  $\mathscr{A} $, $\mathscr{B} $, $\mathscr{C} $, $\mathscr{D} $, $\mathscr{E} $, and $\mathscr{H} $ are positive definite, there exists $\alpha>1$  such that the corresponding tensors of $\frac{\partial P}{\partial x} \pm \alpha\frac{\partial P}{\partial y}$ and $\alpha\frac{\partial Q}{\partial y} \pm\frac{\partial Q}{\partial x}$ are positive definite. For this $\alpha$, we may find $\delta\in(0,\delta_2]$ such that for any $\mathbf{p} \in B(\mathbf{0}, \delta )$,
\begin{equation}\label{eq4}
0<\left(\frac{\partial P}{\partial x} \pm\alpha \frac{\partial P}{\partial y}-\frac{\partial X}{\partial x} \pm\alpha\frac{\partial X}{\partial y}\right)(\mathbf{p})<1,
\end{equation}
and
\begin{equation}\label{eq5}
0<\left(\alpha\frac{\partial Q}{\partial y} \pm\frac{\partial Q}{\partial x}+\alpha\frac{\partial Y}{\partial y} \pm\frac{\partial Y}{\partial x}\right)(\mathbf{p})<1.
\end{equation}
For any $\mathbf{p}\in \mathcal{C}^s(1,\delta_1)\setminus \{(0,0)\}$. Note that
$$
\begin{aligned}
D_{\mathbf{p}}F\left(
\begin{array}{c }
1 \\
\alpha
\end{array}
\right)=&
\left(
\begin{array}{cc}
1-\frac{\partial P}{\partial x}  +\frac{\partial X}{\partial x} & -\frac{\partial P}{\partial y}  + \frac{\partial X}{\partial y}\\
\frac{\partial Q}{\partial x}  +\frac{\partial Y}{\partial x}& 1+\frac{\partial Q}{\partial y}  + \frac{\partial Y}{\partial y}
\end{array}
\right)_{\mathbf{p}}
\left(
\begin{array}{c }
1 \\
\alpha
\end{array}
\right)\\
=&
\left(
\begin{array}{c}
1-\left(\frac{\partial P}{\partial x}+\alpha\frac{\partial P}{\partial y}  -\frac{\partial X}{\partial x}- \alpha\frac{\partial X}{\partial y}\right)\\
\alpha+\left(\alpha\frac{\partial Q}{\partial y}+\frac{\partial Q}{\partial x}  + \alpha\frac{\partial Y}{\partial y}+\frac{\partial Y}{\partial x}\right)
\end{array}
\right)_{\mathbf{p}}
.
\end{aligned}
$$
Denoting $D_{\mathbf{p}}F (1,\alpha)^T=(u_1,v_1)$, by (\ref{eq4}) and (\ref{eq5}), one has $0<u_1<1$ and $v_1>\alpha$.
That is, $D_{\mathbf{p}}F (1,\alpha)^T\in \mathcal{C}^u(\alpha)$. Similarly, one can prove that
$D_{\mathbf{p}}F (-1,\alpha)^T , D_{\mathbf{p}}F (1,-\alpha)^T$ and $ D_{\mathbf{p}}F (-1,-\alpha)^T$ are all in $\mathcal{C}^u(\alpha)$. So  $D_{\mathbf{p}}F (\mathcal{C}^u(\alpha))\subset \mathcal{C}^u(\alpha)$.
(4) First, we note that the $\delta$ stated in part (3) ensures that parts (1) and (2) remain valid. Let $\mathbf{p}=(p_x,p_y), \mathbf{q}=(q_x,q_y) \in \Gamma$ for some $\Gamma \in V(\mathcal{C}^s(1,\delta),\alpha)$ and observe that  $|q_x-p_x|\le\alpha^{-1}|q_y-p_y|$ where $\alpha$ is defined as in part (3). We may write $q_x-p_x=\lambda(q_y-p_y)$ for some $\lambda\in[-\alpha^{-1},\alpha^{-1}]\subset[-1,1]$.    Then by (\ref{al2}),
$$
\begin{aligned}
&|g(\mathbf{q})-g(\mathbf{p})|\\
=& \left|\int_0^{q_y-p_y}\frac{d}{dt}g(p_x+\lambda t, p_y+ t )dt\right|\\
=& \left|\int_0^{q_y-p_y} \left[1+\left(\frac{\partial Q}{\partial y}  +\frac{\partial Y}{\partial y} +\lambda \frac{\partial Q}{\partial x}  +\lambda\frac{\partial Y}{\partial x}\right)(p_x+\lambda t, p_y+ t ) \right]dt\right|\\
>& |q_y-p_y|.
\end{aligned}
$$
This completes the proof.
\end{proof}

\begin{rem}\label{rem2}
One can similarly prove that there exist $\beta\in (0,1)$ and $\delta>0$ such that:

(1) for any $\mathbf{p}=(p_x,p_y)\in \mathcal{C}^u(1,\delta)\setminus\{(0,0)\}$, we have that $g(\mathbf{p}) > p_y$ if $p_y > 0$ and $g(\mathbf{p})<p_y$ if $p_y < 0$;

(2) $F^{-1}(\partial \mathcal{C}^u(1,\delta))\cap \mathcal{C}^u(1)=(0,0)$;

(3) $D_{\mathbf{p}}F^{-1} (\mathcal{C}^s(\beta))\subset \mathcal{C}^s(\beta)$ for any $\mathbf{p}=(p_x,p_y)\in \mathcal{C}^u(1,\delta)\setminus\{(0,0)\}$;

(4) $|\pi_1\circ F^{-1}(\mathbf{p})-\pi_1\circ F^{-1}(\mathbf{q}) |\geq |  {p}_x -  {q}_x  |$,  if $\mathbf{p}=(p_x,p_y), \mathbf{q}=(q_x,q_y) \in \Gamma$ for  $\Gamma \in H(\mathcal{C}^u(1,\delta),\beta)$, where $H(\mathcal{C}^u(1,\delta),\beta)=\{\Gamma\subset \mathcal{C}^u(1,\delta):\ \Gamma \text{ is a } C^1 \text{ arc}, T_x\Gamma \subset \mathcal{C}^s(\beta)\}.$
\end{rem}

For a subset $A\in \mathbb{R}^2$, we define the stable and unstable sets of $(0,0)$ restricted to $A$ as
$$
\begin{aligned}
W^s(F,A)&=\{(x,y)\in A:\ F^k(x,y)\in A, k\geq 0, \lim\limits_{k\to +\infty}F^k(x,y)=(0,0)\},\\
W^u(F,A)&=\{(x,y)\in A:\ F^{-k}(x,y)\in A, k\geq 0, \lim\limits_{k\to +\infty}F^{-k}(x,y)=(0,0)\}.
\end{aligned}
$$
For $\alpha>0$, set
$$\widetilde{V}(\mathcal{C}^s(1,\delta),\alpha)=\{\Gamma \in V(\mathcal{C}^s(1,\delta),\alpha):\  \text{the endpoints of } \Gamma \text{ lie in } \partial \mathcal{C}^s(1)\}. $$

The following proposition is almost a stable manifold theorem. See Proposition 5 in \cite{M}.
\begin{prop}\label{prop1}
There exist $\alpha>1$ and $\delta>0$ such that for any $\Gamma\in \widetilde{V}(\mathcal{C}^s(1,\delta),\alpha)$, the intersection $\Gamma \cap W^s(F, \mathcal{C}^s(1,\delta))$ contains exactly one point.
\end{prop}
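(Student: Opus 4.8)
\emph{Proof proposal.} The plan is to run the classical cone-field (graph-transform) scheme used for hyperbolic fixed points, cf.\ Proposition~5 of \cite{M}, adapted to the present degenerate situation. The one novelty is that the only available ``expansion'', Lemma~\ref{lem2}(4), is merely the inequality $|g(\mathbf p)-g(\mathbf q)|\ge|p_y-q_y|$ with no rate; this is nevertheless enough, because for uniqueness we will contradict the convergence $F^k\to\mathbf 0$ rather than try to build a contraction. Throughout, $\alpha>1$ and $\delta>0$ are the constants furnished by Lemma~\ref{lem2}, shrinking $\delta$ once more so that $\mathcal{C}^s(1,\delta)$ lies in a neighbourhood on which $F$ is a diffeomorphism onto its image (possible since $D_{\mathbf 0}F=\mathrm{Id}$, the derivatives of $P$ and $Q$ vanishing at $\mathbf 0$). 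Two preliminary observations: (a) for $\alpha>1$ no $\Gamma\in\widetilde V(\mathcal{C}^s(1,\delta),\alpha)$ contains $\mathbf 0$, since near any of its points $\Gamma$ is simultaneously tangent to $\mathcal{C}^u(\alpha)$ and contained in $\mathcal{C}^s(1,\delta)$, which is impossible at $\mathbf 0$; (b) $W^s(F,\mathcal{C}^s(1,\delta))=\{\mathbf p:\ F^k(\mathbf p)\in\mathcal{C}^s(1,\delta)\ \forall\,k\ge0\}$, because if the forward orbit stays in $\mathcal{C}^s(1,\delta)$ then $|\pi_1F^k(\mathbf p)|$ strictly decreases by Lemma~\ref{lem2}(1), a compactness argument on $\{|\pi_1|\ge c\}\cap\mathcal{C}^s(1,\delta)$ shows this decrease cannot stall, hence $\pi_1F^k(\mathbf p)\to0$ and then $F^k(\mathbf p)\to\mathbf 0$ since $|\pi_2|\le|\pi_1|$ in the cone.

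The core is a \emph{single-crossing} statement: for every $\Gamma\in\widetilde V(\mathcal{C}^s(1,\delta),\alpha)$ the set $F(\Gamma)\cap\mathcal{C}^s(1,\delta)$ is a single nondegenerate arc, and it again lies in $\widetilde V(\mathcal{C}^s(1,\delta),\alpha)$. By Lemma~\ref{lem2}(3) the arc $F(\Gamma)$ is $\alpha$-vertical, hence, being everywhere steeper than the lines $y=\pm x$, the graph $x=\psi(y)$ of a $C^1$ function with $|\psi'|\le\alpha^{-1}<1$; by Lemma~\ref{lem2}(1) it stays in $\{|x|<\delta\}$, so intersecting with $\mathcal{C}^s(1,\delta)$ is the same as intersecting with $\mathcal{C}^s(1)$. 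By observation (a), $\Gamma$ lies in one component of $\mathcal{C}^s(1,\delta)\setminus\{\mathbf 0\}$, say the one with $x>0$; then $\psi>0$ along all of $F(\Gamma)$ by Lemma~\ref{lem2}(1), and, tracking signs with Lemma~\ref{lem2}(1) together with the sharp form of Lemma~\ref{lem2}(2) obtained inside its proof ($g(\mathbf q)>q_y$ for $q_y>0$ and $g(\mathbf q)<q_y$ for $q_y<0$ on $\partial\mathcal{C}^s(1,\delta)$), the images of the two endpoints of $\Gamma$ lie strictly outside $\mathcal{C}^s(1)$ on opposite sides, one with $\pi_2>\pi_1$, the other with $\pi_2<-\pi_1$. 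Since $y\mapsto\psi(y)-y$ and $y\mapsto\psi(y)+y$ have derivatives confined to $[-1-\alpha^{-1},\alpha^{-1}-1]$ and $[1-\alpha^{-1},1+\alpha^{-1}]$ respectively, each is strictly monotone and has a unique zero; the set $\{y:\ \psi(y)\ge|y|\}$ is then precisely the closed interval between those two zeros, which yields the desired sub-arc, with endpoints on $\partial\mathcal{C}^s(1,\delta)$. (The component with $x<0$ is symmetric.)

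Granting this, set $\Gamma_n=\{\mathbf p\in\Gamma:\ F^k(\mathbf p)\in\mathcal{C}^s(1,\delta),\ 0\le k\le n\}$. An induction shows each $\Gamma_n$ is a nondegenerate $\alpha$-vertical sub-arc of $\Gamma$ with $F^n(\Gamma_n)\in\widetilde V(\mathcal{C}^s(1,\delta),\alpha)$: assuming this for $n$, the single-crossing statement applied to $F^n(\Gamma_n)$ yields a nondegenerate arc $\Lambda=F(F^n(\Gamma_n))\cap\mathcal{C}^s(1,\delta)\in\widetilde V(\mathcal{C}^s(1,\delta),\alpha)$, and $\Gamma_{n+1}$ is its preimage under $F^{n+1}|_{\Gamma_n}$, which is injective, being a composition of restrictions of the diffeomorphism $F$ to the arcs $F^k(\Gamma_n)\subset\mathcal{C}^s(1,\delta)$; here $\Gamma_{n+1}=\{\mathbf p\in\Gamma_n:\ F^{n+1}(\mathbf p)\in\mathcal{C}^s(1,\delta)\}$ precisely because $\Lambda$ is \emph{all} of $F(F^n(\Gamma_n))\cap\mathcal{C}^s(1,\delta)$. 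The $\Gamma_n$ are nested nonempty compacta, so $\bigcap_n\Gamma_n\neq\emptyset$, and by observation (b) this intersection equals $\Gamma\cap W^s(F,\mathcal{C}^s(1,\delta))$; this proves existence of an intersection point. For uniqueness, let $\mathbf p\neq\mathbf q$ lie in $\bigcap_n\Gamma_n$. Each $\Gamma_n$ is a sub-arc of $\Gamma$ containing both, hence so is the closed sub-arc $\Gamma'\subset\Gamma$ joining them, so $\Gamma'\subset\bigcap_n\Gamma_n$ and therefore $F^k(\Gamma')\subset\mathcal{C}^s(1,\delta)$ for every $k\ge0$; by Lemma~\ref{lem2}(3) each $F^k(\Gamma')\in V(\mathcal{C}^s(1,\delta),\alpha)$. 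Applying Lemma~\ref{lem2}(4) to the points $F^k(\mathbf p),F^k(\mathbf q)$ of $F^k(\Gamma')$ gives $|\pi_2F^{k+1}(\mathbf p)-\pi_2F^{k+1}(\mathbf q)|\ge|\pi_2F^k(\mathbf p)-\pi_2F^k(\mathbf q)|$, so this quantity stays $\ge|\pi_2\mathbf p-\pi_2\mathbf q|>0$ for all $k$; but $F^k(\mathbf p),F^k(\mathbf q)\to\mathbf 0$, a contradiction.

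The step I expect to be the main obstacle is the single-crossing statement: it requires using parts (1)--(3) of Lemma~\ref{lem2} simultaneously, extracting the \emph{signed} inequality on $\partial\mathcal{C}^s(1,\delta)$ from within the proof of part~(2), and carrying out the sign bookkeeping that places the two endpoint-images on opposite sides of the cone, so that the steep Lipschitz graph $x=\psi(y)$ meets $\mathcal{C}^s(1)$ in exactly one arc. Everything after that is a routine nested-intersection argument, the only mildly delicate point being observation~(b), which identifies $W^s$ restricted to $\mathcal{C}^s(1,\delta)$ with ``the forward orbit never leaves $\mathcal{C}^s(1,\delta)$'' and is where Lemma~\ref{lem2}(1) and compactness are used.
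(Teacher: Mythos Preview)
Your proposal is correct and follows essentially the same approach as the paper's proof: iterate the arc, intersect with the cone to obtain a nested sequence of nonempty compacta (existence), and for uniqueness combine the non-contraction inequality of Lemma~\ref{lem2}(4) with the convergence $F^k\to\mathbf 0$ coming from Lemma~\ref{lem2}(1). The paper simply asserts ``By Lemma~\ref{lem2}, $F^k(\Gamma)\cap\mathcal C^s(1,\delta)\in\widetilde V(\mathcal C^s(1,\delta),\alpha)$'' and ``By Lemma~\ref{lem2}(1), $\lim F^k(\mathbf p')=\mathbf 0$''; your single-crossing argument and observation~(b) are precisely the details filling in those two steps, not a different route.
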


\begin{proof}
Define $\alpha>1$ and $\delta>0$ as in Lemma \ref{lem2}. Without loss of generality, we assume that the endpoints of $\Gamma\in \widetilde{V}(\mathcal{C}^s(1,\delta),\alpha)$ are $\mathbf{p}=(x_1,y_1)$ and $\mathbf{q}=(x_2, y_2)$ with
$x_1=y_1>0$ and $ x_2=-y_2>0$.
By Lemma \ref{lem2},
 $$F^k(\Gamma)\cap \mathcal{C}^s(1,\delta)\in \widetilde{V}(\mathcal{C}^s(1,\delta),\alpha),\ \forall k\geq 0.$$
Set $\Gamma_0=\Gamma$ as above. Define
$\Gamma_k=F^k(\Gamma_{k-1})\cap \mathcal{C}^s(1,\delta), \ \forall k\geq 1$ and $I_k=F^{-k}(\Gamma_k), \ k\geq 0.$ Then $\{I_k\}$ is a nested sequence of nonempty compact sets and hence $\cap_{k\in \mathbb{N}}I_k\neq \emptyset.$

Now we show that $\cap_{k\in \mathbb{N}}I_k$ contains exactly one point. If there are two points $\mathbf{p'}, \mathbf{q'} \in \cap_{k\in \mathbb{N}}I_k$, then $F^k(\mathbf{p'}),F^k(\mathbf{q'})\in\mathcal{C}^s(1,\delta),\ \forall k\geq 0$. By Lemma \ref{lem2}.(1), $\lim\limits_{k\to +\infty}F^k(\mathbf{p'})=\lim\limits_{k\to +\infty}F^k(\mathbf{q'})=(0,0)$. On the other hand, by Lemma \ref{lem2}.(4), $|\pi_2(F^k(\mathbf{p'})-F^k(\mathbf{q'}))|\geq |\pi_2( \mathbf{p'} - \mathbf{q'} )|$. Therefore $\mathbf{p'}=\mathbf{q'}$.
\end{proof}

Now we give the proof of Theorem \ref{thmA}.

\begin{proof}[Proof of Theorem \ref{thmA}.]
We only prove the existence of local stable manifold. One can similarly get the local unstable manifold using the same argument replacing $F$ by $F^{-1}$.

The above Proposition \ref{prop1} concludes that a vertical arc extending across $\mathcal{C}^s(1,\delta)$ intersects the stable set $W^s(F,\mathcal{C}^s(1,\delta))$ exactly once. It then follows immediately that $W^s(F, \mathcal{C}^s(1,\delta))$ is the graph of a Lipschitz function $\varphi^s: [-\delta, \delta] \to \mathbb{R}$. The Lipschitz constant must be less than $\alpha$. Otherwise, there exists a vertical arc $\Gamma'$ such that the intersection $\Gamma' \cap W^s(F, \mathcal{C}^s(1,\delta))$ contains more than one point.

Set
$$N=\{(x,y)\in \mathbb{R}^2: |x|\leq \delta, |y|\leq \delta\}.$$
By Lemma \ref{lem2}, we have $W^s(F, \mathcal{C}^s(1,\delta))=W^s(F, N).$ So we get the local Lipschitz stable manifold at $(0,0)$.
\end{proof}

\section{Degenerate Hartman theorem}
\begin{defn}
        Let $f$ and $g$ are two continuous mappings from a topological space $X$ to a topological space $Y$. We say that $f$ is homotopy to $g$, if there exists a continuous mapping $\gamma : X \times I \to Y$ such that for any $x \in X$,

        $$\gamma(x,0) = f(x) \quad \text{and} \quad \gamma(x,1) = g(x).$$
        Here $I = [0,1]$. The mapping $\gamma$ is called a homotopy between $f$ and $g$.
\end{defn}

\begin{proof}[Proof of Theorem \ref{thmB}]
Fix a small neighborhood $N$ of $(0, 0)$.
Take $\mathbf{p}_0=(x_{s,0}, y_{s,0})\in W^s(F,N)$ and a short vertical line $\textup{VL} \subset N$ containing $\mathbf{p}_0$ such that the endpoints $\mathbf{p}_1=(x_{s,1}, y_{s,1})$ and $ \mathbf{p}_2=(x_{s,2}, y_{s,2}) $ satisfy that $x_{s,0}=x_{s,1}=x_{s,2}>0$ and $y_{s,1}<y_{s,0}<y_{s,2}$. By the above Lemma \ref{lem2}, the image $F(\textup{VL})$ is  a vertical arc.

For given $ y\in [y_{s,1}, y_{s,2}]$ and $t\in [0,1]$, define a homotopy
$\gamma_1((x_{s,0},y),t)=tF(x_{s,0},y)+(1-t)(x_{s,0},y)$ such  that $\gamma_1(\mathbf{p}_1,[0,1])$ lies beneath $W^s(F,N)$, and $\gamma_1(\mathbf{p}_2,[0,1])$ lies above $W^s(F,N)$.
We claim that for any fixed $t\in [0,1]$, the image $\gamma_1(\textup{VL},t)$ is a vertical arc. Indeed, noting that
$$\gamma_1((x_{s,0},y),t)=
\left(
\begin{array}{c}
tf(x_{s,0},y)+(1-t)x_{s,0}\\
tg(x_{s,0},y)+(1-t)y
\end{array}
\right):=
\left(
\begin{array}{c}
\phi \\
\varphi
\end{array}
\right), $$
it is enough to show  $\left|\frac{d\varphi}{d\phi} \right|> 1.$
The conclusion is obviously hold if $t=0$.
Now we assume  $t\in (0,1]$. In this case,
$$
\begin{aligned}
\frac{d\varphi}{d\phi}=\frac{ {d\varphi}/{dy}}{ {d\phi}/{dy}}
&=\frac{t \frac{\partial g}{\partial y}(x_{s,0},y)+1-t}{t\frac{\partial f}{\partial y}(x_{s,0},y)}\\
&=\frac{t\left[1+ \frac{\partial Q}{\partial y}(x_{s,0},y)+ \frac{\partial Y}{\partial y}(x_{s,0},y)\right]+1-t}{t\left[ \frac{\partial P}{\partial y}(x_{s,0},y)+ \frac{\partial X}{\partial y}(x_{s,0},y)\right]}
=\frac{  \frac{\partial Q}{\partial y}(x_{s,0},y)+ \frac{\partial Y}{\partial y}(x_{s,0},y) + \frac1t}{ \frac{\partial P}{\partial y}(x_{s,0},y)+ \frac{\partial X}{\partial y}(x_{s,0},y) }.
\end{aligned}
$$
Noting that $\frac1t>1$, and that $\frac{\partial Q}{\partial y}, \frac{\partial Y}{\partial y} , \frac{\partial P}{\partial y}$, and $\frac{\partial X}{\partial y}$ all tend to $0$ as the point tends to $(0,0)$, we get  $\left|\frac{d\varphi}{d\phi} \right|> 1$ if $(x_{s,0},y)$ is close to $(0,0)$.

Similarly, we take $\overline{\mathbf{p}}_0=(\overline{x}_{s,0}, \overline{y}_{s,0})\in W^s(F,N)$ and a short vertical line $\overline{\textup{VL}} \subset N$ containing $\overline{\mathbf{p}}_0$ such that the endpoints $\overline{\mathbf{p}}_1=(\overline{x}_{s,1}, \overline{y}_{s,1})$ and $ \overline{\mathbf{p}}_2=(\overline{x}_{s,2}, \overline{y}_{s,2}) $ satisfy that $\overline{x}_{s,0}=\overline{x}_{s,1}=\overline{x}_{s,2}<0$ and $\overline{y}_{s,1}<\overline{y}_{s,0}<\overline{y}_{s,2}$. For  $ y\in [\overline{y}_{s,1}, \overline{y}_{s,2}]$ and $t\in [0,1]$, define a homotopy
$\overline{\gamma}_1((\overline{x}_{s,0},y),t)=tF(\overline{x}_{s,0},y)+(1-t)(\overline{x}_{s,0},y)$ such that $\overline{\gamma}_1(\overline{\mathbf{p}}_1,[0,1])$ lies beneath $W^s(F,N)$ and $\overline{\gamma}_1(\overline{\mathbf{p}}_2,[0,1])$ lies above $W^s(F,N)$. Then for any fixed $t\in [0,1]$, the image $\overline{\gamma}_1(\overline{\textup{VL}},t)$ is a vertical arc.

Now, take $\mathbf{q}_0=(x_{u,0}, y_{u,0})\in W^u(F,N)$ and a short horizontal line $\textup{HL}\subset N$ containing $\mathbf{q}_0$ such that  the endpoints $\mathbf{q}_1=(x_{u,1}, y_{u,1})  $ and $ \mathbf{q}_2=(x_{u,2}, y_{u,2})  $ satisfy that $y_{u,0}=y_{u,1}=y_{u,2}>0$ and $x_{u,1}<x_{u,0}<x_{u,2}$. Then the inverse image $F^{-1}(\textup{HL})$ is a horizontal arc. For given $ x\in [x_{u,1}, x_{u,2}]$ and $r\in [0,1]$, define a homotopy $\gamma_2((x,y_{u,0}),r)=rF^{-1}(x,y_{u,0})+(1-r)(x,y_{u,0})$. It can be similarly  checked that for any fixed $r\in [0,1]$, the image $\gamma_2(\textup{HL},r)$ is a horizontal arc.

Symmetrically,  we take $\overline{\mathbf{q}}_0=(\overline{x}_{u,0}, \overline{y}_{u,0})\in W^u(F,N)$ and a short horizontal line $\overline{\textup{HL}}\subset N$ containing $\overline{\mathbf{q}}_0$ such that  the endpoints $\overline{\mathbf{q}}_1=(\overline{x}_{u,1}, \overline{y}_{u,1})  $ and $ \overline{\mathbf{q}}_2=(\overline{x}_{u,2}, \overline{y}_{u,2})  $ satisfy that  $\overline{y}_{u,0}=\overline{y}_{u,1}=\overline{y}_{u,2}<0$ and $\overline{x}_{u,1}<\overline{x}_{u,0}<\overline{x}_{u,2}$. Then the inverse image $F^{-1}(\overline{\textup{HL}})$ is a horizontal arc. For given $ { x}\in [\overline{x}_{u,1}, \overline{x}_{u,2}]$ and $r\in [0,1]$, define a homotopy $\overline{\gamma}_2((x,\overline{y}_{u,0}),r)=rF^{-1}(x,\overline{y}_{u,0})+(1-r)(x,\overline{y}_{u,0})$. Then for any fixed $r\in [0,1]$, the image $\overline{\gamma}_2(\overline{\textup{HL}},r)$ is a horizontal arc.

We claim that, in the above construction, there exist the points such that for some $m \in \mathbb{N}$,
$$
\mathbf{q}_2=F^m(\mathbf{p}_2),\ \mathbf{q}_1=F^m(\overline{\mathbf{p}}_2), \
\overline{\mathbf{q}}_2=F^m(\mathbf{p}_1), \text{ and } \overline{\mathbf{q}}_1=F^m(\overline{\mathbf{p}}_1).
$$
Indeed, for the upper endpoints $\mathbf{p}_2$ and $\mathbf{\overline{p}}_2$ of $\textup{VL}$ and $\overline{\textup{VL}}$,  respectively, suppose that after \( m \) iterations, they are mapped to the points $\mathbf{q}_2$ and $\mathbf{q}_1$, respectively. Without loss of generality, we assume that the vertical coordinate of $\mathbf{q}_2$ is greater than that of $\mathbf{q}_1$, that is $y_{u,2}>y_{u,1}$. By lowering the vertical coordinate of $\mathbf{p}_2$, we can ensure that $\mathbf{q}_1$ and $\mathbf{q}_2$ lie on the same horizontal line $\textup{HL}$.

We reduce $N$ to the region bounded by
\begin{multline*} \bigcup_{i=0}^{m-1}F^i(\gamma_1(\mathbf{p}_1,[0,1])),\  \bigcup_{i=0}^{m-1}F^i(\gamma_1(\mathbf{p}_2,[0,1])),\   \bigcup_{i=0}^{m-1}F^i(\overline{\gamma}_1(\overline{\mathbf{p}}_1,[0,1])),\  \bigcup_{i=0}^{m-1}F^i(\overline{\gamma}_1(\overline{\mathbf{p}}_2,[0,1])),\\ 
\textup{VL} ,\   \textup{HL} ,\  \overline{\textup{VL}},\ \textup{and}\ \overline{\textup{HL}}.
\end{multline*}

\begin{figure}[htbp]
\centering
\includegraphics[width=0.5\textwidth]{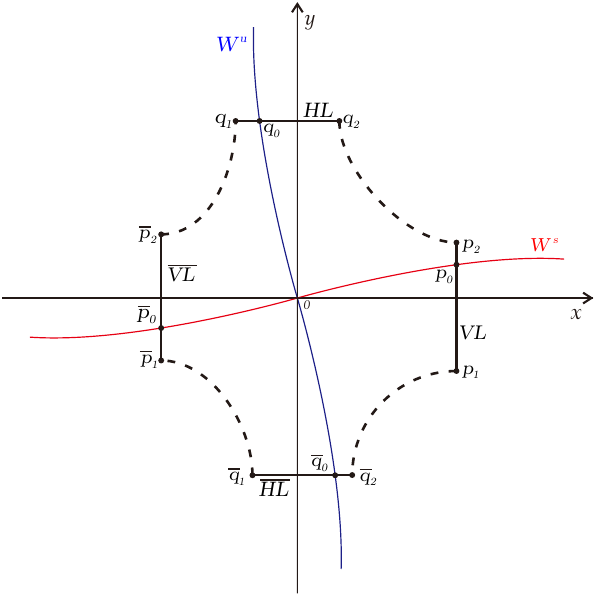}
\caption{The boundary of N. We use four dashed lines, from $\mathbf{p}_2$ to $\mathbf{q}_2$, from  $\mathbf{q}_1$ to $\overline{\mathbf{p}}_2$, from $\overline{\mathbf{p}}_1$ to  $\overline{\mathbf{q}}_1$, and from $\overline{\mathbf{q}}_2$ to  $\mathbf{p}_1$, to represent the corresponding iterated curves.} \label{figure}
\end{figure}

In the following, we will prove that the images of the vertical arcs remain vertical, and the inverse images of the horizontal arcs remain horizontal.

Since $\left|\frac{\partial P}{\partial y}\right|< \frac{\partial P}{\partial x}$ and $\left|\frac{\partial Q}{\partial x}\right|< \frac{\partial Q}{\partial y}$, there exists $\epsilon>0$ such that $\left|\frac{\partial P}{\partial y}\right|<(1-\epsilon)\frac{\partial P}{\partial x}$ and $ \left|\frac{\partial Q}{\partial x}\right|<(1-\epsilon)\frac{\partial Q}{\partial y}$.
We may assume that $N$ is sufficiently small so that the inequalities $\left|\frac{\partial X}{\partial x}\right|+\left|\frac{\partial X}{\partial y}\right|<\frac{\epsilon}{2}\left|\frac{\partial P}{\partial x}\right|$
and $\left|\frac{\partial Y}{\partial x}\right|+\left|\frac{\partial Y}{\partial y}\right|<\frac{\epsilon}{2}\left|\frac{\partial Q}{\partial y}\right|$ hold on $N$. For $(x,y)\in N$ and $\nu\in (\tau_0,1]$ where $\tau_0= 1-\frac{\epsilon}{2}$, we have
$$
\begin{aligned}
DF(x,y)\left(
\begin{array}{c }
\nu \\
1
\end{array}
\right)=&
\left(
\begin{array}{cc}
1-\frac{\partial P}{\partial x}  +\frac{\partial X}{\partial x} & -\frac{\partial P}{\partial y}  + \frac{\partial X}{\partial y}\\
\frac{\partial Q}{\partial x}  +\frac{\partial Y}{\partial x}& 1+\frac{\partial Q}{\partial y}  + \frac{\partial Y}{\partial y}
\end{array}
\right)
\left(
\begin{array}{c }
\nu \\
1
\end{array}
\right)\\
=&
\left(
\begin{array}{c}
v-\frac{\partial P}{\partial x}\nu-\frac{\partial P}{\partial y} +\frac{\partial X}{\partial x}\nu + \frac{\partial X}{\partial y}\\
1+\frac{\partial Q}{\partial x}\nu+\frac{\partial Q}{\partial y} +\frac{\partial Y}{\partial x}\nu + \frac{\partial Y}{\partial y}
\end{array}
\right).
\end{aligned}
$$
Note that
$$
\begin{aligned}
\left|\nu-\frac{\partial P}{\partial x}\nu-\frac{\partial P}{\partial y} +\frac{\partial X}{\partial x}\nu + \frac{\partial X}{\partial y}\right|&\le\left(1-\frac{\partial P}{\partial x}\right)\nu+\left|\frac{\partial P}{\partial y}\right| +\left|\frac{\partial X}{\partial x}\nu + \frac{\partial X}{\partial y}\right|\\
&\le\left(1-\frac{\partial P}{\partial x}\right)\nu+\left|\frac{\partial P}{\partial y}\right| +\left|\frac{\partial X}{\partial x}\right| + \left|\frac{\partial X}{\partial y}\right|\\
&<\left(1-\frac{\partial P}{\partial x}\right)\nu+\left(1-\frac{\epsilon}{2}\right) \frac{\partial P}{\partial x} \\
&<\nu,
\end{aligned}
$$
and
$$
\begin{aligned}
\left|1+\frac{\partial Q}{\partial x}\nu+\frac{\partial Q}{\partial y} +\frac{\partial Y}{\partial x}\nu + \frac{\partial Y}{\partial y}\right|&\ge 1+\frac{\partial Q}{\partial y}-\left|\frac{\partial Q}{\partial x}\right|\nu-\left|\frac{\partial Y}{\partial x}\right|\nu-\left|\frac{\partial Y}{\partial y}\right|\\
&\ge 1+\frac{\partial Q}{\partial y}-\left|\frac{\partial Q}{\partial x}\right|-\left|\frac{\partial Y}{\partial x}\right|-\left|\frac{\partial Y}{\partial y}\right|\\
&>1+\frac{\partial Q}{\partial y}-\left(1-\frac{\epsilon}{2}\right) \frac{\partial Q}{\partial y} \\
&>1.
\end{aligned}
$$
It follows that
$$DF(x,y)(\mathcal C^u(\tau^{-1}))\subset \mathcal C^u(\tau^{-1}),\ \forall \tau\in (\tau_0,1]. $$
In another words, we have $DF(x,y)(\mathcal C^u(\kappa))\subset \mathcal C^u(\kappa)$ for some $\kappa\in(1,\tau^{-1}_0)$.
The cone field \( C^u(\kappa) \) is invariant in the region \( N \). That is, the images of the vertical arcs remain vertical.

One can similarly prove that the inverse  images of the horizontal  arcs remain horizontal.

Observe that, together with $W^u(F,N)$, all the arcs $F^i(\gamma_1(\textup{VL},t))$ and $F^i(\overline{\gamma}_1(\overline{\textup{VL}},t))$ for $t\in[0,1]$ and $i\in\mathbb{N}$ form a (vertical) foliation $\mathcal{F}_1$ for $N$ and, together with $W^s(F,N)$,  all the arcs $F^{-j}(\gamma_2(\textup{HL},r))$ and $F^{-j}(\overline{\gamma}_2(\overline{\textup{HL}},r))$ for $r\in[0,1]$ and $j\in\mathbb{N}$ form a (horizontal) foliation $\mathcal{F}_2$ for $N$. By the above discussion, every point $(x,y)$ in $N$ can be uniquely determined by the intersection of a vertical leaf from $\mathcal{F}_1$ and a horizontal leaf from $\mathcal{F}_2$.
Thus we may write any $(x,y)\in N\setminus(W^s(F, N)\cup W^u(F,N))$ as
$$(x,y)=F^i(\widetilde{\gamma}_1(\widetilde{\textup{VL}},t))\cap F^{-j}(\widetilde{\gamma}_2(\widetilde{\textup{HL}},r)),$$
for some $t,r\in[0,1]$, $\widetilde{\textup{VL}}\in \{\textup{VL}, \overline{\textup{VL}}\}$, $\widetilde{\textup{HL}}\in \{\textup{HL}, \overline{\textup{HL}}\}$, $\widetilde{\gamma}_1\in \{{\gamma}_1, \overline{\gamma}_1 \}$, $\widetilde{\gamma}_2\in \{{\gamma}_2, \overline{\gamma}_2\}$, and  $i,j\in\mathbb{N}  $.

Now we consider the map $F'=F_1\times F_2$.  Applying the same technology as $F$,
take points $\mathbf{p}'_1$,  $\mathbf{p}'_2$,  $\overline{\mathbf{p}}'_1$,  $\overline{\mathbf{p}}'_2$,  $\mathbf{q}'_1$,  $\mathbf{q}'_2$,  $\overline{\mathbf{q}}'_1$,  $\overline{\mathbf{q}}'_2$, vertical lines $\text{VL}'$, $\overline{\text{VL}}'$ and horizontal lines $\text{HL}'$, $\overline{\text{HL}}'$ such that for some $m\in \mathbb N$,
 $$
\mathbf{q}'_2=F'^m(\mathbf{p}'_2),\ \mathbf{q}'_1=F'^m(\overline{\mathbf{p}}'_2), \
\overline{\mathbf{q}}'_2=F'^m(\mathbf{p}'_1), \text{ and } \overline{\mathbf{q}}'_1=F'^m(\overline{\mathbf{p}}'_1).
$$
Involving two homotopies $\gamma_1'$ and $\overline{\gamma}_1'$, we obtain a region $N'$ bounded by
\begin{multline*}
\bigcup_{i=0}^{m-1}F'^i(\gamma'_1(\mathbf{p}'_1,[0,1])),\ \bigcup_{i=0}^{m-1}F'^i(\gamma'_1(\mathbf{p}'_2,[0,1])),\ 
\bigcup_{i=0}^{m-1}F'^i(\overline{\gamma}'_1(\overline{\mathbf{p}}'_1,[0,1])),\ 
\bigcup_{i=0}^{m-1}F'^i(\overline{\gamma}'_1(\overline{\mathbf{p}}'_2,[0,1])),\\
\textup{VL}' ,\ \textup{HL}' ,\ \overline{\textup{VL}}',\ \textup{and}\ \overline{\textup{HL}}'.
\end{multline*}

Since the sequence $\{F_1^k(x)\}_{k\in\mathbb{N}}$, determined by a small initial value of $x>0$, is decreasing and bounded below, it converges. Similarly, $\{F_2^{-k}\}_{k\in\mathbb{N}}$ converges. In fact, both sequences converges to $0$.

Note that all the arcs $(F')^{i'}(\gamma'_1(\textup{VL}',t'))$ and $(F')^{i'}(\overline{\gamma}'_1(\overline{\textup{VL}}',t'))$ for $t'\in[0,1]$ and $i'\in\mathbb{N}$ form a (vertical) foliation $\mathcal{F}'_1$ for $N'\setminus \{(x,y)\in N': x=0\}$ and,  all the arcs $(F')^{-j'}(\gamma'_2(\textup{HL}',r'))$ and $(F')^{-j'}(\overline{\gamma}'_2(\overline{\textup{HL}}',r'))$ for $r'\in[0,1]$ and $j'\in\mathbb{N}$ form a (horizontal) foliation $\mathcal{F}'_2$ for $N'\setminus \{(x,y)\in N': y=0\}$. Every point $(x',y')$ in $N'$ with $xy\neq 0$ can be uniquely determined by the intersection of a vertical leaf from $\mathcal{F}'_1$ and a horizontal leaf from $\mathcal{F}'_2$. Thus we may write
$$(x',y')=(F')^{i'}(\widetilde{\gamma}'_1(\widetilde{\textup{VL}},t'))\cap (F')^{-j'}(\widetilde{\gamma}'_2(\widetilde{\textup{HL}},r')),$$
for some $t',r'\in[0,1]$, $\widetilde{\textup{VL}}\in \{\textup{VL}', \overline{\textup{VL}}'\}$, $\widetilde{\textup{HL}}\in \{\textup{HL}', \overline{\textup{HL}}'\}$, $\widetilde{\gamma}'_1\in \{{\gamma}'_1, \overline{\gamma}'_1 \}$, $\widetilde{\gamma}'_2\in \{{\gamma}'_2, \overline{\gamma}'_2\}$, and  $i',j'\in\mathbb{N}  $.

We claim that $(N,F)$ is topologically conjugate to $(N',F')$.

Define a map $H:N\rightarrow N'$ by
$$
\begin{aligned}
H(x,y)&=H(F^i(\widetilde{\gamma}_1(\widetilde{\textup{VL}},t))\cap F^{-j}(\widetilde{\gamma}_2(\widetilde{\textup{HL}},r)))\\
&=(F')^{i}(\widetilde{\gamma}'_1(\widetilde{\textup{VL}}',t))\cap (F')^{-j}(\widetilde{\gamma}'_2(\widetilde{\textup{HL}}',r)).
\end{aligned}
$$
Here $(x,y)=F^i(\widetilde{\gamma}_1(\widetilde{\textup{VL}},t))\cap F^{-j}(\widetilde{\gamma}_2(\widetilde{\textup{HL}},r))$ for some $t,r\in[0,1]$ and $i,j\in\mathbb{N}$. We have that
$$
\begin{aligned}
H\circ F(x,y)&=H(F(F^i(\widetilde{\gamma}_1(\widetilde{\textup{VL}},t))\cap F^{-j}(\widetilde{\gamma}_2(\widetilde{\textup{HL}},r))))\\
&=H(F^{i+1}(\widetilde{\gamma}_1(\widetilde{\textup{VL}},t))\cap F^{-j+1}(\widetilde{\gamma}_2(\widetilde{\textup{HL}},r)))\\
&=(F')^{i+1}(\widetilde{\gamma}'_1(\widetilde{\textup{VL}}',t))\cap (F')^{-j+1}(\widetilde{\gamma}'_2(\widetilde{\textup{HL}}',r))\\
&=F'((F')^{i}(\widetilde{\gamma}'_1(\widetilde{\textup{VL}}',t))\cap (F')^{-j}(\widetilde{\gamma}'_2(\widetilde{\textup{HL}}',r)))\\
&=F'\circ H(x,y).
\end{aligned}
$$
On the other hand, open sets having as boundaries the segments of $\mu$-horizontal and $\mu$-vertical curves from the foliations $\mathcal{F}_1$, $\mathcal{F}_2$, $\mathcal{F}'_1$, and $\mathcal{F}'_2$ form a base for the topology on $N$ and $N'$ respectively. Therefore $H$ is a
homeomorphism.
\end{proof}

\section{Finite shadowing property}
The following lemma is one of the main results of \cite{PP} which will be used in our proof of Theorem \ref{thmC}.

\begin{lem}\label{lem7}
(Theorem 4.1 of \cite{PP}) Let $F=(f,g):\mathbb{R}^2\to \mathbb{R}^2$ be a homeomorphism and $K$ be a compact subset of the plane.
Suppose that for any $\Delta_0>0$ there exist $\delta, \Delta>0$ such that $\delta<\Delta<\Delta_0$ and if $\mathbf{p}=(p_x, p_y) \in K$, one has that:

(1)
$$
F(S(\mathbf{p},\delta))\subset int(S(F(\mathbf{p}),\Delta))
\text{ and }
F^{-1}(S(F(\mathbf{p}),\delta))\subset int(S( \mathbf{p} ,\Delta)),
$$
where $S(\mathbf{p},\delta)=\{\mathbf{q}=(q_x, q_y)\in \mathbb{R}^2:\ |q_x-p_x|\leq \delta, |q_y-p_y|\leq \delta\}$ and $int(\cdot)$ is the interior of the set;

(2)
\begin{equation*}
    \begin{aligned}
        \left|g\left(p_x + v, p_y\right) - g\left(p_x, p_y\right)\right| &< \delta \text{ for } 0 \leq |v| \leq \Delta,\\
        \left|g\left(p_x + v, p_y + w\right) - g\left(p_x, p_y\right)\right| &> \delta \text{ for } 0 \leq |v| \leq \delta, |w| = \delta,\\
        \left|f\left(p_x + v, p_y + w\right) - f\left(p_x, p_y\right)\right| &< \delta \text{ for } (v, w) \in H(\delta),
    \end{aligned}
\end{equation*}
\raggedright where
\begin{equation*}
    H(\delta) = \{|v| \leq \delta, w = 0\} \cup \{|v| = \delta, |w| \leq \delta\}.
\end{equation*}

Then $F$ has the finite shadowing property in the set $K$.
\end{lem}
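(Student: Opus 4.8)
The plan is to feed each finite pseudoorbit into a nested--continuum construction, using conditions (1) and (2) to show that ``vertical crossings'' of small boxes centred along the pseudoorbit are carried forward by $F$ onto continua containing vertical crossings of the next box, and then to extract the shadowing point from a backward intersection. Fix $\epsilon>0$. Choose $\Delta_0\in(0,\epsilon)$ and let $\delta<\Delta<\Delta_0$ be the radii furnished by the hypothesis for this $\Delta_0$; this $\delta$ will be the one required in the definition of the finite shadowing property. It suffices to show that any finite $\delta$-pseudoorbit $\{p_k:0\le k\le m\}\subset K$ is $\Delta$-shadowed, since $\Delta<\epsilon$; that is, to produce a single $q$ with $F^{i}(q)\in S(p_i,\Delta)$ for all $0\le i\le m$, because then $d(F^{i}(q),p_i)\le\Delta<\epsilon$.

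For a box $S(\mathbf a,\rho)$, call a compact connected set $C$ a \emph{vertical crossing} of $S(\mathbf a,\rho)$ if $C\subset S(\mathbf a,\rho)$ and $C$ meets both horizontal edges $\{q_y=a_y\pm\rho\}$; \emph{horizontal crossings} are defined symmetrically. Two elementary plane-topology facts about a square will be used: a vertical crossing and a horizontal crossing of the same box must intersect; and if a continuum $D$ lies in a box, meets its horizontal midline, stays off its two vertical edges, while the continuum containing $D$ reaches strictly above and strictly below the box, then $D$ contains a vertical crossing of the box. The heart of the argument is a propagation statement: \textbf{for each $\mathbf p\in K$, the $F$-image of a (suitably thin) vertical crossing of a small box around $p_k$ contains a vertical crossing of a small box around $p_{k+1}$.} The roles of the hypotheses are: condition (1) confines $F(\text{box})$ inside a box about the image, so the propagated curve cannot escape through the \emph{vertical} sides of the target box; the second inequality in (2) says a vertical $\delta$-displacement stretches the $g$-coordinate by more than $\delta$, pushing the top and bottom endpoints of the crossing strictly above and below the target box, while the first inequality in (2) pins the image of a midline point to within $\delta$ of $g(\mathbf p)$, hence into the target box's midline band; the third inequality in (2) --- whose precise set $H(\delta)$ is exactly the locus one meets when trimming the propagated continuum down to a box --- forces the $f$-coordinate to vary by less than $\delta$ along the relevant arcs, keeping the output crossing thin enough to be a legitimate input for the next step. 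The two square facts then convert these estimates into the propagation statement.

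Granting the propagation statement, the assembly is formal. Take $C_0$ to be the central vertical segment of the small box around $p_0$, which is a vertical crossing. Inductively, if $C_k$ is a vertical crossing of the small box around $p_k$, the propagation statement yields a vertical crossing $C_{k+1}\subset F(C_k)$ of the small box around $p_{k+1}$; in particular $C_{k+1}\subset S(p_{k+1},\Delta)$. From $C_{k+1}\subset F(C_k)$ we get $F^{-1}(C_{k+1})\subset C_k$, and inductively $F^{-j}(C_k)\subset C_{k-j}\subset S(p_{k-j},\Delta)$ for $0\le j\le k$. Pick any $q\in F^{-m}(C_m)$, which is nonempty because $C_m\neq\emptyset$ and $F$ is a homeomorphism. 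Then for $0\le i\le m$, $F^{i}(q)=F^{-(m-i)}(F^{m}(q))\in F^{-(m-i)}(C_m)\subset C_i\subset S(p_i,\Delta)$, hence $d(F^{i}(q),p_i)\le\Delta<\epsilon$. Thus $q$ $\epsilon$-shadows the pseudoorbit; since $\delta$ was chosen from $\epsilon$ alone (and not from $m$ or from the pseudoorbit), $F$ has the finite shadowing property in $K$.

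I expect the entire difficulty to sit in the propagation statement, and more precisely in calibrating the scales so that the induction closes: the expansion estimate in (2) operates at scale $\delta$, the confinement estimate in (1) outputs scale $\Delta$, and the $\delta$-pseudoorbit introduces a further $\delta$-error between $F(p_k)$ and $p_{k+1}$, so the exact radius of the ``small box'' at each index and the quantitative meaning of ``thin'' must be chosen so that the output of one step genuinely qualifies as an input for the next while the final radii never exceed $\Delta$. Performing this calibration, and carrying out the plane-topology bookkeeping that passes from the ambient continuum $F(C_k)$ to an honest vertical crossing inside it (this is exactly where the awkward set $H(\delta)$ is needed), is the main obstacle; everything else is routine.
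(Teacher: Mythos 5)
First, note that the paper offers no proof of this lemma: it is imported verbatim as Theorem 4.1 of the cited Petrov--Pilyugin paper, so there is no in-text argument to compare your route against. Judged on its own terms, your proposal is an architecture rather than a proof: all of the quantitative and topological content is packed into the ``propagation statement,'' which you state, motivate, and then explicitly decline to establish (you yourself locate ``the entire difficulty'' there). That statement essentially \emph{is} the theorem; the nested-intersection assembly that follows it is indeed routine, so what remains unproven is not a loose end but the core.

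Moreover, the calibration problem you flag is a genuine obstruction to the induction as you have set it up, not mere bookkeeping. The second inequality in (2), applied at $\mathbf p=p_k$, pushes the image of the horizontal edges of $S(p_k,\delta)$ out of the band $\{|y-g(p_k)|\le\delta\}$, which is centred at the $y$-coordinate of $F(p_k)$; but your target box is centred at $p_{k+1}$, whose $y$-coordinate may differ from $g(p_k)$ by up to $\delta$ because of the pseudo-orbit jump. Hence the image of the top edge of $S(p_k,\delta)$ need not clear the top of $S(p_{k+1},\delta)$ (that side can sit as high as $g(p_k)+2\delta$), and the claim that the propagated continuum ``reaches strictly above and strictly below the target box'' does not follow. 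One must either shadow $d$-pseudo-orbits for a $d$ strictly smaller than the $\delta$ of the hypothesis, or run the crossing argument in boxes centred at $F(p_k)$ and then translate; your sketch makes neither choice. Two further unaddressed points: the absolute-value inequality in (2) does not by itself determine whether a horizontal edge maps upward or downward (one needs a connectedness argument along the edge, combined with the first inequality, to fix the signs); and your second ``elementary plane-topology fact'' is false as stated, since a continuum can reach strictly above and strictly below a box while avoiding its vertical edges simply by travelling around the outside of the box, in which case its trace inside the box need not contain a vertical crossing. Until the propagation statement is proved with consistent scales and correctly stated topological lemmas, the proposal does not establish the result.
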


\begin{proof}[Proof of Theorem \ref{thmC}.]
(1) Noting that
$$
        DF(x,y) =
            \begin{pmatrix}
                1 - \frac{\partial P(x,y)}{\partial x} + \frac{\partial X(x,y)}{\partial x} & -\frac{\partial P(x,y)}{\partial y} + \frac{\partial X(x,y)}{\partial y} \\
                \frac{\partial Q(x,y)}{\partial x} + \frac{\partial Y(x,y)}{\partial x} & 1 + \frac{\partial Q(x,y)}{\partial y} + \frac{\partial Y(x,y)}{\partial y}
            \end{pmatrix},
$$
 for any \( \alpha > 0 \), there exists a neighborhood  \( K_1 \) of $\mathbf{0}$ such that
\begin{equation}\label{eq6}
   \| DF(x,y) \| \leq 1 + \alpha\quad \text{and} \quad \| DF^{-1}(x,y) \| \leq 1 + \alpha,\ \forall (x,y)\in K_1,
\end{equation}
where $\|\cdot \|$  is the norm of the linear operator.

Assuming \( \alpha < 1 \), let \( \Delta = 2\delta \).
Choose a subset $K_2\subset K_1$ such that $\mathbf{0}\in K_2$, $F(K_2)\subset K_1$ and $F^{-1}(K_2)\subset K_1$.
By (\ref{eq6}), we have
\begin{equation}\label{eq8}
   F(S(\mathbf{p},\delta))\subset int(S(F(\mathbf{p}),\Delta))
,\ \
F^{-1}(S(F(\mathbf{p}),\delta))\subset int(S( \mathbf{p} ,\Delta)), \ \forall \mathbf{p}\in K_2.
\end{equation}

(2)
If necessary, we further shrink
$K_1$ so that
\begin{equation}\label{eq7}
    \left| \frac{\partial Y(x,y)}{\partial x} \right| \leq \frac{1}{4}
     \text{ and } \left| \frac{\partial Q(x,y)}{\partial x} \right| \leq \frac{1}{4},\ \forall (x,y)\in K_1.
\end{equation}
Without loss of generality, we assume that
$$
S(K_2, \Delta)=\{\mathbf{q}=(q_x, q_y)\in \mathbb{R}^2:\ |q_x-p_x|\leq \delta, |q_y-p_y|\leq \delta, \mathbf{p}=(p_x, p_y)\in K_2\}\subset K_1.
$$

For any $\mathbf{p}=(p_x, p_y)\in K_2$,  $v\in \mathbb{R}$ with  \( |v| \leq \Delta = 2\delta \), by the Mean Value Theorem and (\ref{eq7}), there are $\xi_1, \xi_2\in \mathbb R$ such that
\begin{equation}\label{eq9}
\begin{array}{rcl}
|g(p_x + v, p_y) - g(p_x, p_y)|& \leq& |Q(p_x + v, p_y) - Q(p_x, p_y)| + |Y(p_x + v, p_y) - Y(p_x, p_y)|\\
& \leq & \left| \frac{\partial Q(\xi_1, p_y)}{\partial x} \right| |v| + \left| \frac{\partial Y(\xi_2, p_y)}{\partial x} \right| |v|\\
& \leq & \frac{|v|}{4}+\frac{|v|}{4}\\
& < & \delta.
\end{array}
\end{equation}

(3) Assume that $v,w\in \mathbb R$ satisfy  \( 0 \leq |v| \leq \delta \) and \( |w| = \delta \). Let \( v = \lambda w \), where \( |\lambda| \leq 1 \). Then, by (\ref{al2}) we have
\begin{equation}\label{eq10}
\begin{array}{rcl}
|g(p_x + v, p_y + w) - g(p_x, p_y)| &=& |g(p_x + \lambda w, p_y + w) - g(p_x, p_y)|\\
& = & \left| \int_0^w \frac{d}{dt} g(p_x + \lambda t, p_y + t) \, dt \right|\\
&=& \left| \int_0^w \left(1+\frac{\partial Q}{\partial y} + \frac{\partial Y}{\partial y} + \lambda \frac{\partial Q}{\partial x} + \lambda \frac{\partial Y}{\partial x}\right)(p_x + \lambda t, p_y + t) dt \right|\\
   &>&|w|=\delta.
\end{array}
\end{equation}
(4) 
Assume \( 0 \leq |w| \leq \delta \) and \( |v| = \delta \). Let \( w = \lambda v \), where \( |\lambda| \leq 1 \). Then, by (\ref{al1}) we have
\begin{equation}\label{eq11}
\begin{array}{rcl}
|f(p_x + v, p_y + w) - f(p_x, p_y)|& =& \left| \int_0^v \frac{d}{dt} f(p_x + t, p_y + \lambda t) \, dt \right|\\
&=&\left| \int_0^v \left(1-\left(\frac{\partial P}{\partial x} - \frac{\partial X}{\partial x} + \lambda \frac{\partial P}{\partial y} - \lambda \frac{\partial X}{\partial y}\right)\right)(p_x + t, p_y + \lambda t) dt\right|\\
                &<&|v| = \delta.
\end{array}
\end{equation}

Let $w=0, |v|\leq \delta$. Using (\ref{al1}) again,  we have
\begin{equation}\label{eq12}
\begin{array}{rcl}
|f(p_x + v, p_y ) - f(p_x, p_y)|& =& \left| \int_0^v \frac{d}{dt} f(p_x + t, p_y ) \, dt \right|\\
&=&\left| \int_0^v \left(1-\left(\frac{\partial P}{\partial x} - \frac{\partial X}{\partial x}\right)\right)(p_x + t, p_y) dt\right|\\
                &<&|v| \\
                &<&\delta.
\end{array}
\end{equation}
Setting $K=K_2$, by (\ref{eq8}), (\ref{eq9})-(\ref{eq12}) and Lemma \ref{lem7},we conclude that \( F  \) exhibits the finite shadowing property  on \( K \). This completes the proof.
\end{proof}
\footnotesize
\begin{spacing}{0.7}

\end{spacing}

\textit{Email address:} 202206021016@stu.cqu.edu.cn, mengshihao2023@163.com, zhouyh@cqu.edu.cn.

\end{document}